\let\uml\"
\newtheorem{thm}{Theorem}
\newtheorem*{que*}{Question}
\newtheorem{prop}[thm]{Proposition}
\newtheorem{lem}[thm]{Lemma}
\newtheorem{remark}[thm]{Remark}
\newtheorem{que}[thm]{Question}
\newtheorem{example}[thm]{Example}
\begin{document}

\title[Minimal spectral radii of skew-reciprocal integer matrices]{On the minimal spectral radii of skew-reciprocal integer matrices}

\author{Livio Liechti}
\address{Department of Mathematics\\
University of Fribourg\\
Chemin du Mus\'ee 23\\
1700 Fribourg\\Switzerland}
\email{livio.liechti@unifr.fr}

\keywords{minimal spectral radius, integer matrix, irreducible matrix, primitive matrix, skew-reciprocal matrix, curve graph, clique polynomial}
\subjclass[2020]{15B48,15B36, 11R06, 05C31, 57K20}

\begin{abstract} 
We determine the minimal spectral radii among all skew-reciprocal 
integer matrices of a fixed even dimension that are primitive or nonnegative and irreducible. 
In particular, except for dimension six, we show that each such class of matrices 
realises smaller spectral radii than the corresponding reciprocal class.
\end{abstract}

\maketitle

\section{Introduction}
Curiously, orientation-reversing integer linear dynamical systems can be 
simpler than orientation-preserving ones in the following sense: among all
matrices~$A\in\mathrm{GL}_2(\mathbb{Z})$ with~$\det(A)=-1$, the smallest spectral 
radius $>1$ is the golden ratio~$\varphi$, while among matrices~$A$ with~$\det(A)=1$, 
the smallest spectral radius~$>1$ is~$\varphi^2$. In this article, we generalise this 
comparison to reciprocal and skew-reciprocal matrices of any even dimension, under the 
assumption of either primitivity or nonnegativity and irreducibility. 

A matrix is \emph{nonnegative} 
if all its coefficients are nonnegative. Such a matrix is \emph{primitive} if some power has strictly 
positive coefficients. A matrix is \emph{irreducible} if it is not conjugate via a permutation matrix 
to an upper triangular block matrix.
We call a matrix \emph{reciprocal} if the set of its eigenvalues (counted with multiplicity) 
is invariant under the transformation~$t\mapsto t^{-1}$. Finally, we call a matrix 
\emph{skew-reciprocal} if the set of its eigenvalues (counted with multiplicity) is invariant 
under the transformation~$t\mapsto -t^{-1}$. Important examples of reciprocal or skew-reciprocal 
matrices are symplectic or antisymplectic matrices, respectively.

We find out that with the exception of dimension six in the primitive case, the skew-reciprocal 
matrices always realise smaller spectral radii~$>1$ than the reciprocal ones. 

\begin{thm}
\label{prim_srwins}
Let~$g \ge 1$ and~$g \ne 3$. Among primitive matrices~$A \in \mathrm{GL}_{2g}(\mathbb{Z})$, the 
skew-reciprocal ones realise a smaller spectral radius~$>1$ than the reciprocal ones. For~$g=3$,
the reciprocal matrices realise a smaller spectral radius than the skew-reciprocal ones.
\end{thm}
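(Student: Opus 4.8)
The plan is to reduce the problem to a question about monic integer polynomials and then to solve the two resulting extremal problems separately before comparing them. Since~$A$ is primitive, the Perron--Frobenius theorem guarantees that its spectral radius is a simple, strictly dominant, positive real eigenvalue~$\lambda>1$; thus in each class the minimal spectral radius~$>1$ is the smallest such Perron root realised by a matrix of the prescribed symmetry and size~$2g$. The reciprocal (respectively skew-reciprocal) condition makes the characteristic polynomial~$p$ satisfy~$p(t)=\pm t^{2g}p(1/t)$ (respectively~$p(t)=\pm t^{2g}p(-1/t)$), so that the substitution~$s=t+t^{-1}$ (respectively~$s=t-t^{-1}$) expresses~$p$ as~$t^{g}q(s)$ (respectively~$t^{g}r(s)$) for a monic integer polynomial of degree~$g$ whose roots are the quantities~$\lambda_j+\lambda_j^{-1}$ (respectively~$\lambda_j-\lambda_j^{-1}$). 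Because~$t\mapsto t+t^{-1}$ and~$t\mapsto t-t^{-1}$ are increasing on~$(1,\infty)$ and~$(0,\infty)$, minimising~$\lambda$ amounts to minimising the largest real root~$\mu>2$ of~$q$ (respectively~$\nu>0$ of~$r$), subject to~$\lambda$ remaining strictly dominant and, crucially, to~$p$ being realisable by a \emph{nonnegative} integer matrix of size exactly~$2g$.

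For the upper bounds I would construct explicit nonnegative primitive matrices. The most economical source is the companion matrix of a polynomial~$t^{2g}-a_{2g-1}t^{2g-1}-\cdots-a_0$ with all~$a_i\ge 0$: it is nonnegative, it is irreducible as soon as~$a_0\ne 0$, and it is primitive once the cycle lengths~$\{2g-i:a_i>0\}$ have greatest common divisor one. Imposing skew-reciprocity translates into the coefficient relations~$c_j=(-1)^{g+j}c_{2g-j}$ and forces the constant term~$a_0=(-1)^{g+1}$; hence a nonnegative companion realisation is available exactly when~$g$ is odd, in which case a small explicit choice such as~$t^{2g}-t^{2g-1}-t-1$ is primitive and has small spectral radius. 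For~$g$ even, and for the reciprocal class in every dimension (where the palindromic symmetry forces~$a_0=-1$ in companion form, as already seen for~$t^2-3t+1$ versus the realising matrix~$\bigl(\begin{smallmatrix}2&1\\1&1\end{smallmatrix}\bigr)$), I would instead build nonnegative realisations by a block or perturbed-companion construction, which yields the corresponding upper bounds.

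The heart of the argument, and the step I expect to be the main obstacle, is the matching lower bounds: one must show that no smaller spectral radius occurs in either class. Here the key integer constraint is that~$p(1)=q(2)$ and~$p(1)=(-1)^{g}p(-1)=r(0)$ are nonzero integers whenever~$1$ is not an eigenvalue, so~$|q(2)|\ge 1$ and~$|r(0)|\ge 1$. Coupling these with the nonnegativity of~$A$ (which constrains the traces~$\mathrm{tr}(A^k)$, hence the power sums of the eigenvalues, and thereby the coefficients of~$q$ and~$r$) and with the strict Perron dominance of~$\lambda$, I would bound the largest qualifying root of the trace polynomial from below and argue that the bound is attained only by the families constructed above. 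Making these inequalities tight for \emph{every}~$g$ is delicate, since the problem is of the same flavour as minimising Mahler measures; I expect the evenness of the dimension and the rigidity of the substitution~$s=t\pm t^{-1}$ to be exactly what renders it tractable.

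Finally I would compare the two minima. Writing them back through~$\lambda=\tfrac12(\mu+\sqrt{\mu^2-4})$ and~$\lambda=\tfrac12(\nu+\sqrt{\nu^2+4})$ gives two explicit families of algebraic numbers, and a monotone comparison of the families shows that the skew-reciprocal minimum is the smaller one for all~$g\ne 3$. The single exception is~$g=3$: in dimension six the reciprocal class attains an unusually small Salem-type root that dips just below the skew-reciprocal minimum, so that the inequality reverses. I would certify this last point by exhibiting the two minimal polynomials explicitly and comparing their largest roots, which settles the exceptional case and completes the proof.
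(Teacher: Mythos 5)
There is a genuine gap, in fact two. First, your explicit upper-bound family for odd~$g$ is quantitatively inadequate. The largest root~$\lambda$ of~$t^{2g}-t^{2g-1}-t-1$ satisfies~$\lambda^{2g}=\lambda(\lambda+1)/(\lambda-1)$, so the normalised spectral radius~$\lambda^{2g}$ tends to infinity as~$g\to\infty$ (already~$\approx 8.19$ for~$g=3$ and~$\approx 10.7$ for~$g=5$), whereas McMullen's minima for the reciprocal class stay below roughly~$7.6$ and converge to~$\varphi^4\approx 6.85$. So with your family the comparison you want to prove is false for every odd~$g\ge 3$, not just~$g=3$: you need the family~$t^{2g}-t^{g+2}-t^{g-2}-1$ (odd~$g$, primitive case) and~$t^{2g}-t^{g+1}-t^{g-1}-1$ (even~$g$), whose normalised roots decrease to~$3+2\sqrt{2}\approx 5.83<\varphi^4$. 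Relatedly, your coefficient claim is wrong: skew-reciprocity does not force the constant term to be~$(-1)^{g+1}$; both signs of~$f(0)$ occur for both parities of~$g$, and for even~$g$ the polynomial~$t^{2g}-t^{g+1}-t^{g-1}-1$ \emph{is} skew-reciprocal and \emph{is} realised by a nonnegative companion matrix, so the dichotomy you build around parity collapses. Your explanation of the~$g=3$ exception is also backwards: it is not that the reciprocal class dips unusually low in dimension six (its minimum,~$\approx 7.57$ normalised, is in line with the rest of the sequence); it is that the skew-reciprocal primitive minimum is unusually high there, because for~$g=3$ the exponents~$g\pm 2$ degenerate to~$5$ and~$1$.

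Second, and more fundamentally, the step you yourself flag as the main obstacle --- the matching lower bounds --- is not an argument but a wish. The constraints~$|q(2)|\ge 1$,~$|r(0)|\ge 1$ together with trace nonnegativity are nowhere near strong enough: minimising the house of an integer polynomial subject to such conditions is a Lehmer/Schinzel--Zassenhaus type problem, and no elementary manipulation of these inequalities yields the sharp dimension-by-dimension minima. What makes the problem tractable is the nonnegativity of~$A$ exploited structurally: the paper invokes McMullen's theorem for the reciprocal class, and for the skew-reciprocal class runs a case analysis over McMullen's classification of curve graphs~$G$ with connected complement and growth~$\lambda(G)<8$ (the graphs~$nA_1$,~$A_2^\ast$,~$A_2^{\ast\ast}$,~$A_2^{\ast\ast\ast}$,~$A_3^\ast$,~$Y^\ast$), combining the clique-polynomial description of characteristic polynomials with the coefficient-symmetry constraints of skew-reciprocity to pin down the unique minimisers; dimension six even needs a separate ad-hoc argument with explicit coefficient bounds. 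Without some replacement for that machinery, your plan cannot be completed, and the final comparison step (which in the paper uses the asymptotics~$3+2\sqrt{2}$ versus~$\varphi^4$, monotonicity of the normalised sequences, and a finite check for small~$g$) never gets off the ground.
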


\begin{thm}
\label{irred_srwins}
Let~$g\ge 1$. Among nonnegative irreducible~$A \in \mathrm{GL}_{2g}(\mathbb{Z})$, the 
skew-reciprocal ones realise a smaller spectral radius~$>1$ than the reciprocal ones. 
\end{thm}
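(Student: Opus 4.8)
The plan is to pass from the matrices to their characteristic polynomials. By Perron--Frobenius, a nonnegative irreducible $A\in\mathrm{GL}_{2g}(\mathbb{Z})$ has spectral radius equal to its Perron root, a simple positive real eigenvalue~$\rho$ dominating the moduli of the other eigenvalues, so minimising the spectral radius means minimising~$\rho$ over the admissible characteristic polynomials in each class. I would encode the two symmetries through the substitutions $z=t+t^{-1}$ and $w=t-t^{-1}$: a reciprocal $p$ of degree~$2g$ factors as $p(t)=t^{g}Q(t+t^{-1})$ and a skew-reciprocal one as $p(t)=t^{g}R(t-t^{-1})$, with $Q,R\in\mathbb{Z}[x]$ monic of degree~$g$. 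The Perron root arises from a \emph{real} root of the reduced polynomial: in the reciprocal case one needs $z_{0}>2$ and $\rho=\tfrac12\bigl(z_{0}+\sqrt{z_{0}^{2}-4}\bigr)$, in the skew-reciprocal case only $w_{0}>0$ and $\rho=\tfrac12\bigl(w_{0}+\sqrt{w_{0}^{2}+4}\bigr)$. Crucially, this reduced root need not be the one of largest modulus, since $\rho$ may be an imprimitively peripheral eigenvalue; this is exactly what allows the skew-reciprocal Perron root to descend towards~$1$, and the required constraint $z_{0}>2$ is the first sign of the asymmetry.

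The conceptual heart, which I would establish next, is a realisability dichotomy visible already in the coefficients. Writing $p(t)=t^{2g}-c_{2g-1}t^{2g-1}-\cdots-c_{1}t-c_{0}$, the companion matrix of~$p$ is nonnegative exactly when all $c_{i}\ge 0$, and irreducible once $c_{0}\ge 1$. A skew-reciprocal polynomial, taken with the sign making it anti-palindromic, has constant coefficient $-1$, i.e.\ $c_{0}=1$, which is compatible with nonnegativity; one can therefore realise very small Perron roots by explicit nonnegative companion matrices, for instance of polynomials of the shape $t^{2g}-t^{g}-1$ (Perron root $\varphi^{1/g}$) for odd~$g$, together with period-two block matrices $\left(\begin{smallmatrix}0&B\\ C&0\end{smallmatrix}\right)$ for even~$g$, where the block product carries the reduced data and repairs the determinant sign, which is $(-1)^{g}$. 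I would verify nonnegativity, skew-reciprocity and irreducibility (via the cycle structure of the associated digraph) for such a family and optimise within it to obtain the skew-reciprocal minimum. By contrast, a reciprocal $p$ is palindromic, so its constant coefficient is $+1$ and a nonnegative companion matrix is impossible; the cheapest realisations are thus forbidden, and realising a reciprocal polynomial forces a denser nonnegative structure.

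The main obstacle is the matching lower bound for the reciprocal class: one must show that for every~$g$ no nonnegative irreducible reciprocal matrix attains a spectral radius as small as the skew-reciprocal minimum. Here I would use arithmetic rigidity together with trace positivity. Pairing eigenvalues gives $p(1)=\prod_{j}(2-z_{j})$ and $p(-1)=\prod_{j}(2+z_{j})$, both nonzero integers when $\pm1$ are not eigenvalues, whence $|p(1)|,|p(-1)|\ge 1$; combined with $\mathrm{tr}(A^{k})=\sum_{j}P_{k}(z_{j})\ge 0$ for all $k\ge 1$, where $P_{k}$ is the Dickson polynomial with $P_{k}(t+t^{-1})=t^{k}+t^{-k}$, and with the palindromic obstruction to any thin (companion) realisation, these constraints should push the reduced Perron root $z_{0}$ a definite distance above~$2$ and yield the desired lower bound on~$\rho$. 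The delicate points are making this estimate uniform in~$g$ and sharp enough to clear the skew-reciprocal value, and disposing of the inverse-eigenvalue subtleties for nonnegative matrices (including the determinant parity for even~$g$ in the constructions). I expect the extremal reciprocal configurations to be governed by one explicit family whose optimality is certified by the integrality and trace inequalities above, after which comparing the two explicit minima for each $g\ge 1$ finishes the proof.
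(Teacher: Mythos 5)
Your overall architecture --- explicit cheap skew-reciprocal realisations plus a lower bound for the entire reciprocal class --- matches the logic of the paper, and the realisation half is essentially fine (the paper also just takes companion matrices, of $t^{2g}-t^g-1$ for odd~$g$ and of $t^{2g}-t^{g+1}-t^{g-1}-1$ for even~$g$; your block construction is unnecessary). The genuine gap is your third paragraph, which is where the entire content of the theorem lives. There you only list necessary conditions (nonnegativity of $\mathrm{tr}(A^k)$, $|p(\pm1)|\ge1$, the impossibility of a nonnegative companion matrix for a palindromic polynomial) and assert that they \emph{should} push the reciprocal Perron root above the skew-reciprocal minimum, uniformly in~$g$. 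This is exactly the step the paper does not attempt elementarily: it quotes McMullen's theorem that reciprocal nonnegative irreducible matrices in $\mathrm{GL}_{2g}(\mathbb{Z})$ satisfy $\rho(A)^{2g}\ge\varphi^4$, a sharp result whose proof runs through the clique polynomial and the classification of graphs of small growth. Soft trace and integrality constraints cannot substitute for it, because the bound you need is essentially optimal: the skew-reciprocal minima (normalised) tend to $3+2\sqrt{2}\approx 5.83$ for even~$g$, and at $g=2$ they equal $\varphi^4$ exactly. Worse, at $g=2$ there is no strict separation to prove at all: the matrix $\bigl(\begin{smallmatrix}0&B\\I&0\end{smallmatrix}\bigr)$ with $B=\bigl(\begin{smallmatrix}2&1\\1&1\end{smallmatrix}\bigr)$ is nonnegative and irreducible, has characteristic polynomial $t^4-3t^2+1$ (palindromic, hence reciprocal) and spectral radius $\varphi$, which by Proposition~\ref{irred_even_prop} and Remark~\ref{uniqueness_even} is also the skew-reciprocal minimum in dimension four. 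So the uniform strict lower bound your plan promises does not exist; this boundary case is glossed over by the paper as well, but the paper can fall back on McMullen's $\varphi^4$ bound for every other~$g$, whereas your route has nothing behind it. Note also that the companion-matrix obstruction buys you nothing for a lower bound: reciprocal nonnegative irreducible matrices are plentiful, they are simply never companion matrices.

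There is a second, structural flaw: your parametrisations $p(t)=t^{g}Q(t+t^{-1})$ and $p(t)=t^{g}R(t-t^{-1})$ are not exhaustive, and they fail precisely on the extremal polynomials. A skew-reciprocal polynomial factors through $w=t-t^{-1}$ only if the fixed points $\pm i$ of $\lambda\mapsto-\lambda^{-1}$ do not occur as roots; but for every even~$g$ the minimiser $t^{2g}-t^{g+1}-t^{g-1}-1$ vanishes at $\pm i$ (for $g=2$, $t^4-t^3-t-1=(t^2+1)(t^2-t-1)$), so it is invisible to your reduction, and likewise anti-palindromic reciprocal polynomials escape the $z$-parametrisation, so your lower-bound argument would not even range over the full reciprocal class. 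The paper instead encodes the symmetry constraints coefficientwise (Lemma~\ref{srec_constraints}), proves the skew-reciprocal minima (Theorem~\ref{irred_minima}) via McMullen's classification of curve graphs with growth below~$8$ --- with the shortcut, for odd~$g$, that $A^2$ is reciprocal, so McMullen's bound gives $\rho(A)^{2g}\ge\varphi^2$ directly --- and only then compares explicit minima.
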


Naturally, the following question arises by dropping the hypotheses of primitivity or irreducibility.

\begin{que}
\label{whowins_que}
Let~$g\ge1$. Do the skew-reciprocal matrices~$A \in \mathrm{GL}_{2g}(\mathbb{Z})$ realise a smaller 
spectral radius~$>1$ than the reciprocal ones?
\end{que}

Answering Question~\ref{whowins_que} positively for all~$g$ that are large enough powers of two 
would provide an independent proof of Dimitrov's theorem~\cite{Dimitrov}, also known as the conjecture 
of Schinzel and Zassenhaus~\cite{SZ}. 
This follows from an argument\footnote{The definitions for reciprocality and skew-reciprocality 
used in the article~\cite{L} is slightly different, prescribing the sign of the constant coefficient 
so that the polynomials arise from the action induced on the first homology by mapping classes. 
However, the argument presented there can be adopted directly to the definitions we  
use here.} previously given by the author~\cite{L}.
\smallskip

The proofs of Theorems~\ref{prim_srwins} and~\ref{irred_srwins} are based on McMullen's 
calculation of the minimal possible spectral radii~$>1$ for primitive and nonnegative irreducible
reciprocal matrices~\cite{McM}. In fact, we carry out the same calculation for skew-reciprocal 
matrices in order to determine the minimal spectral radii~$>1$ that arise, and compare 
the values with McMullen's result. The following two theorems summarise our results 
on these minimal spectral radii. 

\begin{thm}
\label{irred_minima}
Let~$g\ge1$. The minimal spectral radius~$>1$ 
among skew-reciprocal nonnegative irreducible matrices~$A\in\mathrm{GL}_{2g}(\mathbb{Z})$ 
is realised by the by the largest root~$\lambda_{2g}$ of 
the polynomial 
\[ t^{2g} - t^g -1 \]
in case~$g$ is odd, and of the polynomial
\[ t^{2g} -t^{g+1}-t^{g-1} -1 \]
in case~$g$ is even. 
\end{thm}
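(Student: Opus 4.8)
The plan is to combine Perron--Frobenius theory with McMullen's positivity argument, carried out with the skew-reciprocal symmetry in place of the reciprocal one. Since $A$ is nonnegative and irreducible, the Perron--Frobenius theorem guarantees that the spectral radius is attained by a positive real eigenvalue $\lambda$, which is therefore the largest real root of the characteristic polynomial $p(t)=\sum_{j=0}^{2g} a_j t^j$, where $a_{2g}=1$ and $a_0=\det(A)=\pm1$. I would first record the exact meaning of skew-reciprocity at the level of coefficients: the eigenvalue set is invariant under $t\mapsto -t^{-1}$ if and only if $a_j = a_0(-1)^j a_{2g-j}$ for every $j$. The case $j=0$ gives $a_0^2=1$, consistent with $\det(A)=\pm1$, and the two admissible signs $a_0=\pm1$ will have to be treated separately.

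For the \emph{upper bound} I would exhibit the companion matrices of the two candidate polynomials. Both have constant term $-1$ and all remaining nonzero coefficients equal to $-1$, so the associated companion matrices are nonnegative (each subdiagonal and final-column entry is $0$ or $1$) and lie in $\mathrm{GL}_{2g}(\mathbb{Z})$ because the constant term is a unit. The nonvanishing constant term makes the defining digraph strongly connected, hence the matrices are irreducible. A direct check of the relation $a_j=a_0(-1)^j a_{2g-j}$ with $a_0=-1$ confirms skew-reciprocity in each parity, and the spectral radius is the largest real root $\lambda_{2g}>1$, which for odd $g$ equals $\varphi^{1/g}$ since $\lambda_{2g}^{\,g}=\varphi$.

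The heart of the argument, and the step I expect to be the main obstacle, is the matching \emph{lower bound}. Following McMullen, I would pass to the reciprocal zeta function $\det(I-tA)^{-1}=\exp\big(\sum_{k\ge1}\mathrm{tr}(A^k)\,t^k/k\big)$. Because $A$ is nonnegative, every $\mathrm{tr}(A^k)$ is a nonnegative integer, so this power series has nonnegative Taylor coefficients and its radius of convergence is exactly $1/\lambda$. The denominator $\det(I-tA)$ is the reversal of $p$, so the skew-reciprocity relations translate into sign-alternating symmetries among its coefficients; combined with the nonnegativity of the coefficients of its reciprocal, these should force $\det(I-tA)$ to agree, up to the allowed sparsity, with the reversal of the extremal polynomial, and pin its smallest positive root at $\ge 1/\lambda_{2g}$. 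Concretely, I would argue that any admissible $p$ of strictly smaller Perron root makes some low-order coefficient of the zeta function negative, contradicting positivity.

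The delicate points are the casework on $a_0=\pm1$ and on the parity of $g$: the extremal polynomial changes shape precisely because the relation $a_j=a_0(-1)^j a_{2g-j}$ behaves differently at the central index $j=g$ according to the parity of $g$, forcing the single middle term $-t^g$ when $g$ is odd and the balanced pair $-t^{g+1}-t^{g-1}$ when $g$ is even. I expect the bulk of the work to be verifying that the choice $a_0=-1$ with this minimal support genuinely beats every competitor --- in particular ruling out the $a_0=+1$ polynomials and any denser skew-reciprocal polynomial --- by tracking just enough of the zeta-function coefficients, exactly as in McMullen's reciprocal computation but with the modified symmetry.
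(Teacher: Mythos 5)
Your upper bound is fine and matches the paper: the companion matrices of $t^{2g}-t^g-1$ (odd $g$) and $t^{2g}-t^{g+1}-t^{g-1}-1$ (even $g$) are nonnegative, irreducible and skew-reciprocal, and your coefficient relation $a_j=a_0(-1)^j a_{2g-j}$ is correct (it is Lemma~\ref{srec_constraints} of the paper). The gap is the lower bound, which is the entire content of the theorem, and your sketch of it does not work as stated. You propose to show that any skew-reciprocal polynomial with Perron root in $(1,\lambda_{2g})$ forces a negative coefficient in $\det(I-tA)^{-1}$. But nonnegativity of those coefficients is only a \emph{necessary} condition for a polynomial to come from a nonnegative matrix; the paper exploits the much sharper structural fact that $\det(I-tA)$ must be the clique polynomial of the curve graph of a digraph. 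More importantly, your plan has no mechanism making the verification finite and uniform in $g$: skew-reciprocity alone does not force sparsity of the characteristic polynomial (the phrase ``up to the allowed sparsity'' assumes what has to be proved), so ``tracking just enough zeta coefficients'' would have to rule out a family of coefficient patterns that grows without bound as $g\to\infty$. In the paper this finiteness is exactly what McMullen's classification supplies: since $(\lambda_{2g})^{2g}<7<8$, any competitor's curve graph must be one of the finitely many graphs $nA_1$, $A_2^\ast$, $A_2^{\ast\ast}$, $A_2^{\ast\ast\ast}$, $A_3^\ast$, $Y^\ast$ with growth $<8$, each carrying a bounded number of weights, and the case analysis then runs over these graphs subject to Lemma~\ref{srec_constraints}.

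You also miss the paper's trick for odd $g$, which bypasses all of this: if $A$ is nonnegative and skew-reciprocal, then $A^2$ is reciprocal, so McMullen's bound $\rho(A^2)^{2g}\ge\varphi^4$ for reciprocal matrices gives $\rho(A)^{2g}\ge\varphi^2=(\lambda_{2g})^{2g}$ immediately, and the companion matrix achieves it. (For even $g$ this shortcut fails because $t^{2g}-t^g-1$ is then not skew-reciprocal, which is precisely why the paper needs the curve-graph case analysis in that parity.) To salvage your approach you would need either to prove a classification result playing the role of McMullen's Theorem~1.6 --- at which point you have reconstructed the hard input rather than avoided it --- or to find the squaring reduction. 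As written, the central claim of your third paragraph is an expectation, not an argument.
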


\begin{thm}
\label{prim_minima}
Let~$g\ge2$. 
The minimal spectral radius~$>1$
among skew-reciprocal primitive matrices~$A\in\mathrm{GL}_{2g}(\mathbb{Z})$
is realised by the by the largest root~$\mu_{2g}$ of 
the polynomial 
\[ t^{2g} - t^{g+2}-t^{g-2} -1 \]
in case~$g$ is odd, and of the polynomial
\[ t^{2g} -t^{g+1}-t^{g-1} -1 \]
in case~$g$ is even. 
\end{thm}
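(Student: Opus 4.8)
The plan is to follow McMullen's strategy for reciprocal matrices, making the modifications forced by the skew-reciprocal functional equation, and then to account for the difference between primitivity and mere irreducibility. Throughout, write $p(t)=\det(tI-A)\in\mathbb Z[t]$ for the characteristic polynomial of a candidate matrix $A\in\mathrm{GL}_{2g}(\mathbb Z)$; it is monic of degree $2g$ with $p(0)=\det(A)=\pm1$. First I would record that skew-reciprocality is equivalent to the functional equation $t^{2g}p(-1/t)=\det(A)\,p(t)$, that is, $a_{2g-j}=\det(A)(-1)^j a_j$ on the coefficients. This immediately shows that both families of candidate polynomials are skew-reciprocal with $\det(A)=-1$, and that the trinomial $t^{2g}-t^{g}-1$ is skew-reciprocal exactly when $g$ is odd, which is the source of the different behaviour in the two parities.

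For the upper bound (realisation) I would use the companion matrix $C_p$ of each candidate polynomial. Because each candidate has the shape $t^{2g}-t^{a}-t^{b}-1$ with all subtracted coefficients equal to $+1$, the matrix $C_p$ is a nonnegative integer matrix with $\det=-1$, hence lies in $\mathrm{GL}_{2g}(\mathbb Z)$, is skew-reciprocal, and has the prescribed Perron root $\mu_{2g}$. The one point needing care is primitivity: in the digraph of $C_p$ the closed walks through the last vertex have lengths $\{2g,\,2g-a,\,2g-b\}$, so $C_p$ is primitive iff $\gcd(2g,a,b)=1$. For $g$ even this reads $\gcd(2g,g+1,g-1)=1$ and for $g$ odd $\gcd(2g,g+2,g-2)=1$, so both candidates are realised primitively. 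By contrast the trinomial $t^{2g}-t^{g}-1$ has lengths $\{2g,g\}$ with $\gcd=g>1$; indeed its roots are the $g$-th roots of the two roots of $s^2-s-1$, so \emph{every} matrix with this characteristic polynomial is imprimitive of period $g$. This is exactly the obstruction forcing the passage from the trinomial to the quadrinomial when $g$ is odd.

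For the lower bound I would reduce the dimension by the substitution $x=t-1/t$, adapted to the involution $t\mapsto -1/t$ in the same way that $x=t+1/t$ is adapted to $t\mapsto1/t$ in the reciprocal case. When $\det(A)=(-1)^g$ every skew-reciprocal pair $\{\mu,-1/\mu\}$ contributes a factor $t^2-(\mu-1/\mu)t-1$, so $p(t)=t^g q(t-1/t)$ for a monic integer polynomial $q$ of degree $g$ whose largest real root is $x_0=\lambda-1/\lambda$; when the parity is opposite one first splits off the forced factors $t^2+1$ (the fixed points $\pm i$ of the involution) and applies the reduction to the quotient. Minimising the spectral radius $\lambda>1$ is then equivalent to minimising $x_0$. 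The positivity of the traces $\mathrm{tr}(A^k)\ge0$ transports, through the Chebyshev-type polynomials defined by $T_0=2$, $T_1=x$, $T_{k+1}=xT_k+T_{k-1}$, into the conditions $\sum_j T_k(x_j)\ge0$ on the roots $x_j$ of $q$, which is precisely the input to McMullen's minimisation, now run for $q$. Carrying this out isolates the two sparsest admissible polynomials and shows that the minimal largest root is attained by the trinomial for $g$ odd and by the quadrinomial otherwise.

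Finally I would pass from irreducibility to primitivity. For $g$ even the irreducible minimiser is already the quadrinomial, realised primitively above, so Theorem~\ref{irred_minima} gives the matching bound $\lambda\ge\lambda_{2g}=\mu_{2g}$ and we are done. For $g$ odd the irreducible minimiser is the trinomial $t^{2g}-t^g-1$, which we have shown is necessarily imprimitive; the content is therefore to establish a \emph{gap}, namely that no skew-reciprocal integer polynomial realisable by a nonnegative irreducible matrix has largest root strictly between that of the trinomial and $\mu_{2g}$. I expect this gap estimate — extracting not merely the minimiser but the second-smallest admissible value from the trace-positivity machinery, while keeping track of the parity bookkeeping introduced by the $t^2+1$ factors — to be the main obstacle. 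Granting it, every primitive matrix has spectral radius at least $\mu_{2g}$, which matches the companion-matrix construction and proves the theorem.
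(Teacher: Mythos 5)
Your realisation direction is fine and matches the paper: companion matrices, primitivity via the $\gcd$ of cycle lengths ($\gcd(2g,g+1,g-1)=1$ for $g$ even, $\gcd(2g,g+2,g-2)=1$ for $g$ odd), and the observation that $t^{2g}-t^g-1$ forces imprimitivity. The problem is the lower bound, which is the entire content of the theorem and which you do not actually prove. The substitution $x=t-1/t$ and the trace/Chebyshev conditions are only necessary conditions, and your assertion that ``carrying this out isolates the two sparsest admissible polynomials'' is precisely the minimisation that has to be performed; the paper does it not by trace positivity but by McMullen's classification of curve graphs $G$ with connected complement and growth $\lambda(G)<8$, followed by a case-by-case analysis of the admissible clique polynomials ($nA_1$, $A_2^\ast$, $A_2^{\ast\ast}$, $A_3^\ast$) under the sign constraints that skew-reciprocity imposes on the coefficients (Lemma~\ref{srec_constraints}).

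Worse, for odd $g$ the statement you reduce everything to is false. You ask for a gap: no skew-reciprocal polynomial realisable by a nonnegative \emph{irreducible} matrix should have largest root strictly between that of $t^{2g}-t^g-1$ and $\mu_{2g}$. Take $p(t)=t^{2g}-2t^g-1$: for odd $g$ it satisfies $t^{2g}p(-1/t)=-p(t)$, hence is skew-reciprocal; its companion matrix is nonnegative and irreducible (though imprimitive, with cycle lengths $g$ and $2g$); and its largest root is $(1+\sqrt{2})^{1/g}$, which lies strictly between $\varphi^{1/g}$ and $\mu_{2g}$, because $(\mu_{2g})^{2g}$ decreases strictly to $3+2\sqrt{2}=\bigl((1+\sqrt{2})^{1/g}\bigr)^{2g}$. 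So primitivity cannot be used merely as a final filter applied after solving the irreducible minimisation; it must enter the case analysis itself, exactly as in the paper, where for instance the clique polynomials $1-t^g-t^{2g}$ and $1-2t^g-t^{2g}$ are discarded precisely because they are not primitive. Finally, you never address $g=3$: there $(\mu_6)^6\approx 8.19>8$, so McMullen's classification (which only covers growth $<8$) cannot close the argument, and the paper needs a separate ad-hoc proof --- a determinant case analysis, Schinzel's lower bound $\varphi^2$ on the Mahler measure of totally real algebraic integers, and a finite check of $231$ candidate polynomials. Any correct proof along your lines would have to supply both of these missing pieces.
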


Given Theorems~\ref{prim_minima} and~\ref{irred_minima}, Theorems~\ref{prim_srwins} and~\ref{irred_srwins} follow readily.

\begin{proof}[Proof of Theorems~\ref{prim_srwins} and~\ref{irred_srwins}]
The normalised sequence~$(\mu_{2g})^{2g}$ converges to the square 
of the silver ratio,~$(1+\sqrt{2})^2 = 3+2\sqrt{2}$, while the normalised sequence~$(\lambda_{2g})^{2g}$
has two accumulation points: again the square of the silver ratio, for even~$g$, but 
also the square of the golden ratio,~$\left(\frac{1+\sqrt{5}}{2}\right)^2 = \frac{3+\sqrt{5}}{2}$, for odd~$g$. 
To see this, note that~$(\mu_{2g})^g$ is the largest real zero of the function
\[ f(t) = t^{2} - t^{1+\frac{2}{g}}-t^{1-\frac{2}{g}} -1 \]
in case~$g$ is odd, and of the function
\[ f(t) = t^{2} -t^{1+\frac{1}{g}}-t^{1-\frac{1}{g}} -1 \]
in case~$g$ is even. Clearly for~$g\to\infty$ any real zero~$> 1$ converges to the larger root of 
the polynomial~$t^2-2t-1$, which is~$1+\sqrt{2}$. Therefore~$(\mu_{2g})^{2g}$ converges 
to~$(1+\sqrt{2})^2 = 3+2\sqrt{2}$. 
The argument for the sequence~$\lambda_g$ is analogous. 

In either case, these 
accumulation points are all smaller than the analogous smallest possible accumulation 
point in the case of reciprocal matrices. Indeed, McMullen proves that the minimal 
accumulation point for the normalised sequence of spectral radii for reciprocal 
matrices is~$\varphi^4$, where~$\varphi$ is the golden ratio~\cite{McM}. This number is 
strictly larger than the square of the silver ratio, so asymptotically the result is given. 
We finish the proof Theorems~\ref{prim_srwins} and~\ref{irred_srwins} by using 
the monotonicity of the sequences of normalised spectral radii, and compare these 
sequences for small~$g$. It turns out that the only case where a normalised sequence 
of the skew-reciprocal matrices is larger than the accumulation point~$\varphi^4$ of the 
normalised sequence of the reciprocal matrices is in the case~$g=3$ of primitive matrices. 
This finishes the proof for~$g\ne 3$. For~$g=3$ in the primitive case, we simply check 
that~$(\mu_6)^6 > 8.18$, whereas the smallest normalised spectral radius in the 
reciprocal case is~$\approx 7.57$ by McMullen's result~\cite{McM}. This finishes the 
proof also in the case~$g=3$. 
\end{proof}

\emph{Applications to pseudo-Anosov stretch factors}. McMullen's result on the 
 minimal spectral radii for reciprocal matrices~\cite{McM} is interesting in the context of 
 minimal stretch factors of pseudo-Anosov mapping classes. For example, it is used by Hironaka 
 and Tsang to find, for a large class of examples, the optimal lower bound for normalised 
 pseudo-Anosov stretch factors in the fully-punctured case~\cite{HiTs}. Naturally, we  
 hope our Theorem~\ref{prim_minima} will be instrumental for an analogous result in the case of 
 orientation-reversing pseudo-Anosov mapping classes. In fact, Theorem~\ref{prim_minima} 
 also presents some of the same polynomials found by the author and Strenner~\cite{LS0} in the 
 search of minimal stretch factors among orientation-reversing pseudo-Anosov mapping 
 classes with orientable invariant foliations.
\smallskip

\emph{Organisation}. In the next section, we minimally review the notion of the clique polynomial 
as well as the input we need from McMullen's work~\cite{McM}, before proving 
Theorems~\ref{irred_minima} and~\ref{prim_minima} in the third and final section. 
\smallskip

\emph{Acknowledgments}. The author thanks Chi Cheuk Tsang and an anonymous 
referee for their comments on a first version of this article. 

\section{The clique polynomial}
In this section, we review parts of McMullen's technique using the curve graph and its 
clique polynomial in order to single out minimal spectral radii among nonnegative 
matrices. We try to keep the discussion as concise as possible and refer to the 
original article~\cite{McM} and the references therein for a more complete discussion.

Let~$\Gamma$ be a directed graph. A \emph{simple closed curve} in~$\Gamma$ is the 
union of directed edges describing a closed directed loop in~$\Gamma$ that visits 
every vertex at most once. The \emph{curve graph}~$G$ of~$\Gamma$ is obtained 
as follows: there is a vertex for every simple closed curve in~$\Gamma$, and two 
vertices are connected by an edge if and only if the corresponding simple closed curves 
have no vertex of~$\Gamma$ in common. Each vertex of~$G$ is given a weight 
describing the number of edges contained in the simple closed curve. 

A subset~$K$ of the vertices of~$G$ is a \emph{clique} if the subgraph induced by~$K$ 
is complete. The \emph{clique polynomial} of~$G$ is defined to be 
\[ 
Q(t) = \sum_K (-1)^{|K|} t^{w(K)},
\]
where we also allow~$K=\emptyset$, and~$w(K)$ is the sum of all weights 
of vertices in~$K$. 

With a nonnegative square matrix~$A$ of dimension~$n\times n$, we associate 
a directed graph~$\Gamma_A$ that has~$n$ vertices and directed edges between 
the vertices according to the coefficients of~$A$. Let~$G_A$ be the associated 
curve graph and let~$Q_A(t)$ be its clique polynomial. By a well-known result in 
graph theory, the characteristic polynomial of~$A$ is the reciprocal of~$Q_A(t)$, 
that is,~$\chi_A(t) = t^{n} Q_A(t^{-1})$. In particular, the spectral 
radius of~$A$ equals the inverse of the smallest modulus among the roots 
of~$Q_A(t)$. 

\subsection{McMullen's classification of graphs with small growth}

McMullen defines a minimal growth rate~$\lambda(G)$ for graphs~$G$. 
We refer to McMullen's original article~\cite{McM} for more details. The only statement we 
need for our purposes is that if~$A$ is a nonnegative matrix of dimension~$n\times n$ 
and with spectral radius~$\rho(A)$, then~$\lambda(G_A)$ is a lower bound 
for the normalised spectral radius~$\rho(A)^n$.

\begin{figure}[h]
\def\svgwidth{255pt}
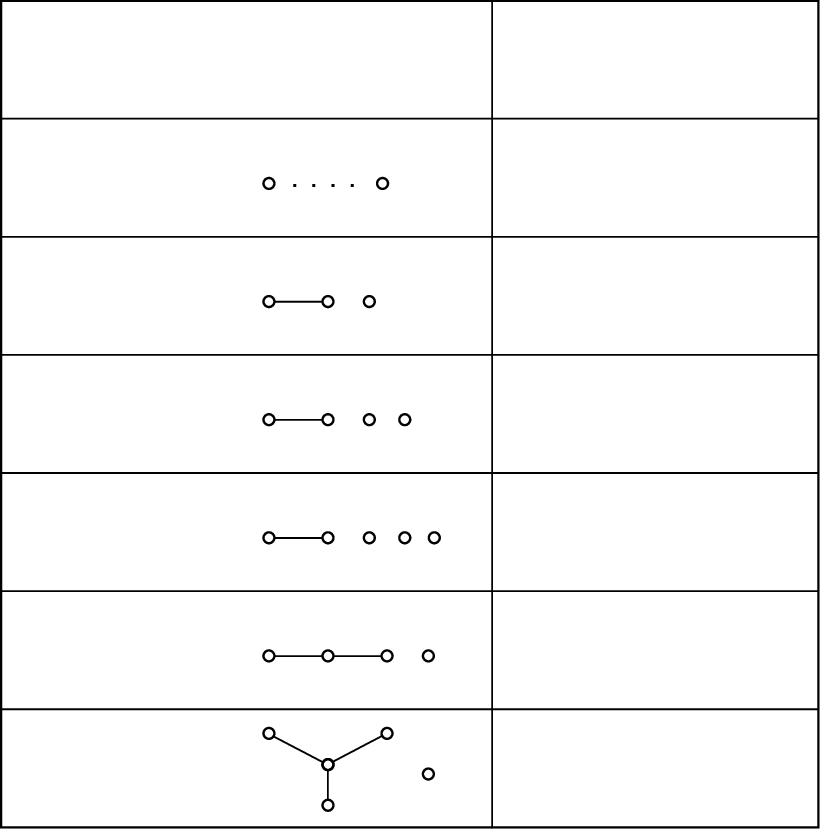
\caption{Some graphs and their minimal growth rates.}
\label{minimalrates}
\end{figure}

We are interested in irreducible matrices~$A$. This means that the 
directed graph~$\Gamma_A$ is strongly connected, which in turn implies that the 
associated curve graph~$G_A$ has complement~$G_A'$ that is connected. 
Here, the complement~$G'_A$ is defined to be the graph with the same vertex 
set as~$G_A$ but the complementary edges. There are very few curve graphs~$G$ 
with~$G'$ connected and minimal growth rate~$\lambda(G)<8$. Our argument is based 
on the following classification due to McMullen~\cite{McM}. 

\begin{thm}[Theorem 1.6 in McMullen~\cite{McM}]
\label{mcm_classif}
The graphs~$G$ with~$G'$ connected and~$1<\lambda(G)<8$ are given by 
\[
A_2^\ast, A_2^{\ast\ast}, A_2^{\ast\ast\ast}, A_3^\ast, Y^\ast \hspace{0.2cm} \mathrm{and} 
\hspace{0.2cm} nA_1,
\]
For~$2\le n\le 7.$
\end{thm}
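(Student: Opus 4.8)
The plan is to access $\lambda(G)$ through the smallest positive root $r(G)$ of the clique polynomial $Q_G(t)=\sum_K(-1)^{|K|}t^{w(K)}$, using the fact that the normalised growth rate is governed by, and decreasing in, this root (so that $\lambda(G)$ is the infimal normalised spectral radius over matrices realising $G$, exactly as recorded in the excerpt). The two directions of the statement are then separated. The easy direction is a finite computation: for each of $A_2^\ast, A_2^{\ast\ast}, A_2^{\ast\ast\ast}, A_3^\ast, Y^\ast$, and for $nA_1$ with $2\le n\le 7$, one writes down $Q_G$, extracts $r(G)$, and reads off the values tabulated in Figure~\ref{minimalrates}; connectedness of the complement $G'$ is immediate by inspection in each case. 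The entire content therefore lies in proving exhaustiveness.

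The main tool for exhaustiveness is a monotonicity lemma: if $H$ is an induced subgraph of $G$, then $\lambda(H)\le\lambda(G)$. This should follow from the vertex-deletion identity $Q_G(t)=Q_{G-v}(t)-t^{w(v)}Q_{G-N[v]}(t)$ together with a Perron--Frobenius argument showing that the smallest positive root can only move inward, i.e.\ that adding a vertex can only increase the growth rate. Granting this, the bound $\lambda(G)<8$ immediately constrains the combinatorics of $G$. Since $\lambda(nA_1)=n$, any $G$ containing eight pairwise non-adjacent vertices already has $\lambda(G)\ge\lambda(8A_1)=8$; hence the independence number of $G$ is at most $7$. Dually, the hypothesis that $G'$ be connected prevents $G$ from being too dense: $G'$ connected on $m$ vertices forces $G$ to miss at least $m-1$ edges from the complete graph, so $G$ cannot degenerate into a disjoint union of large cliques. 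The claim is that balancing these two constraints leaves only finitely many candidate graphs.

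With a bound on $|V(G)|$ in hand, the remaining step is a finite search. One enumerates all graphs on at most the bounded number of vertices whose complement is connected, computes $\lambda(G)$ from the clique polynomial in each case, and discards those with $\lambda(G)\le 1$ or $\lambda(G)\ge 8$. The survivors must then be checked to coincide exactly with the list, completing the classification.

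The hard part is the a priori bound on the number of vertices. Monotonicity by itself controls only the independence number, which does not bound $|V(G)|$ in general, and the interplay with the connected-complement hypothesis is delicate precisely because the threshold value $8$ is attained: it is realised exactly by $8A_1$ and approached asymptotically by several families. The crux is therefore to exhibit explicit finite families of induced subgraphs---built from independent sets, short paths, and the decorated variants $A_k^\ast$ and $Y^\ast$---whose growth rates provably reach or exceed $8$, and to argue that every sufficiently large graph with connected complement must contain one of them. Verifying these threshold estimates, and confirming that the borderline graphs sitting just below $\lambda=8$ are correctly included while their one-vertex extensions are correctly excluded, is where the bulk of the work lies.
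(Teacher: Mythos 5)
This statement is not proved in the paper at all: it is quoted verbatim as Theorem~1.6 of McMullen~\cite{McM} and used as a black box, so the only proof to measure your proposal against is McMullen's. Judged on its own terms, your proposal has two genuine gaps. First, you misidentify the invariant. The graphs $G$ in the classification are unweighted, and $\lambda(G)$ is not the reciprocal of the smallest positive root of a single clique polynomial; it is an infimum of normalised spectral radii $\rho(A)^n$ over \emph{all} nonnegative integer matrices $A$, of all dimensions $n$, whose curve graph is $G$ --- equivalently, an infimum over all integer weightings of $G$ and all digraph realisations, in which distinct curves may share vertices of the digraph, so that $n$ is in general smaller than the total weight. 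This is exactly how $\lambda(nA_1)=n$ arises: $n$ loops of length one at a single vertex give the $1\times 1$ matrix $(n)$, with normalised spectral radius $n$, whereas normalising by total weight would give a different (larger) answer. Consequently your ``easy direction'' is not a finite computation: verifying, say, $\lambda(A_2^{\ast\ast})=3+2\sqrt{2}$ is itself a variational problem over infinitely many weightings, requiring asymptotic analysis of families of polynomial roots. For the same reason your monotonicity lemma cannot be waved through a Perron--Frobenius argument on a fixed polynomial: the normalising exponent changes when one passes to an induced subgraph, and, as the paper itself remarks elsewhere, McMullen's monotonicity on the level of the weighted curve graph is not strict in general, so the precise statement you need has to be formulated and proved with care.

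Second, and decisively, the exhaustiveness step --- which is the entire content of the theorem --- is missing. You correctly observe that bounding the independence number by $7$ (via $\lambda(8A_1)=8$ and monotonicity) does not bound $|V(G)|$, and that one must exhibit explicit finite families of induced subgraphs with growth at least $8$ that every sufficiently large graph with connected complement must contain; but you then declare this to be ``where the bulk of the work lies'' without supplying any of it. No a priori bound on $|V(G)|$ is derived, no threshold family is produced, and no finite search is actually specified, so what you have is a plan whose central step is acknowledged but not executed. As it stands, the proposal does not constitute a proof of the classification.
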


This result tells us that if we want to describe all irreducible nonnegative matrices with 
normalised spectral radius~$<8$, all we have to do is check among those whose 
associated curve graph is among the ones shown in Figure~\ref{minimalrates}. 
In fact, in Section~\ref{proof_sec} we split the proof of Theorems~\ref{irred_minima} 
and~\ref{prim_minima} into four propositions, dealing with the four distinct cases. 
In each case, the first thing we do is to realise the proposed minimal normalised spectral radius, which  
(except in one case for~$g=3$) turns out to be~$<8$. Then McMullen's classification 
result applies and we only have to check the curve graphs given in Theorem~\ref{mcm_classif} to finish the proof. 

\section{Skew-reciprocity and minimal spectral radii}
\label{proof_sec}
In this section, we prove Theorems~\ref{irred_minima} and~\ref{prim_minima}. We 
break down the proof into four separate propositions, distinguishing between the irreducible 
and the primitive case, as well as the case of even and odd~$g$. 

The condition of skew-reciprocity poses slightly different constraints on the 
coefficients of the polynomial than reciprocity. First of all,
we note that if the roots of a polynomial~$f \in \mathbb{Z}[t]$ are invariant under 
the transformation~$\lambda\mapsto -\lambda^{-1}$, then we  
have~$f(t) = \pm t^{\mathrm{deg}(f)}f(-t^{-1})$. This entails the following constraints. 

\begin{lem}
\label{srec_constraints}
Let~$f\in\mathbb{Z}[t]$ be a monic skew-reciprocal polynomial of degree~$2g$. 
Then we have the following conditions on the coefficients of~$f$: 
\begin{enumerate}
\item the moduli of the coefficients of~$t^d$ and~$t^{2g-d}$ agree. More precisely,
\item if~$g$ is even and~$f(0) = 1$, the coefficients of~$t^d$ and~$t^{2g-d}$ 
agree for even~$d$ and differ by a sign for odd~$d$,
\item if~$g$ is even and~$f(0) = -1$, the coefficients of~$t^d$ and~$t^{2g-d}$ 
agree for odd~$d$ and differ by a sign for even~$d$. In particular, the middle 
coefficient of~$f$ vanishes,
\item if~$g$ is odd and~$f(0) = 1$, the coefficients of~$t^d$ and~$t^{2g-d}$ 
agree for even~$d$ and differ by a sign for odd~$d$. In particular, the middle 
coefficient of~$f$ vanishes,
\item if~$g$ is odd and~$f(0) = -1$, the coefficients of~$t^d$ and~$t^{2g-d}$ 
agree for odd~$d$ and differ by a sign for even~$d$.
\end{enumerate}
\end{lem}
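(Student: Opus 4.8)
The plan is to work directly from the functional equation $f(t) = \pm t^{2g}f(-t^{-1})$ recalled just before the statement, expanding both sides and comparing coefficients. Writing $f(t) = \sum_{d=0}^{2g} a_d t^d$ with $a_{2g}=1$, I would first compute
\[
t^{2g}f(-t^{-1}) = \sum_{d=0}^{2g} a_d(-1)^d t^{2g-d} = \sum_{e=0}^{2g} (-1)^e a_{2g-e}\, t^e,
\]
where the re-indexing $e = 2g-d$ uses $(-1)^{2g-e} = (-1)^e$, valid since $2g$ is even. Comparing with $f(t) = s\cdot t^{2g}f(-t^{-1})$, where $s\in\{+1,-1\}$ is the undetermined sign, yields the single relation
\[
a_d = s(-1)^d a_{2g-d} \qquad \text{for all } 0\le d\le 2g.
\]
The modular statement (1) is then immediate, since $|a_d| = |a_{2g-d}|$ irrespective of $s$.

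Next I would pin down the sign $s$. Evaluating the relation at $d=0$ gives $a_0 = s\,a_{2g} = s$, since $f$ is monic, so that $s = a_0 = f(0)\in\{\pm1\}$. This identifies the ambiguous sign with the constant coefficient, and the relation becomes $a_d = f(0)(-1)^d a_{2g-d}$. Substituting the two possible values of $f(0)$ then produces the agree/differ dichotomy: for $f(0)=1$ the relation reads $a_d = (-1)^d a_{2g-d}$, so the coefficients agree for even $d$ and differ by a sign for odd $d$; for $f(0)=-1$ it reads $a_d = (-1)^{d+1}a_{2g-d}$, which reverses the roles of even and odd $d$. These account for the main assertions of cases (2)--(5), and I would emphasise that the parity of $g$ plays no role in this portion of the argument.

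The only place the parity of $g$ genuinely enters is the vanishing of the middle coefficient, which I would treat last. Setting $d=g$ in the relation gives the self-consistency condition $a_g = f(0)(-1)^g a_g$, that is $a_g\bigl(1 - f(0)(-1)^g\bigr) = 0$. Hence $a_g = 0$ exactly when $f(0)(-1)^g = -1$, which occurs precisely in case (3) (with $g$ even and $f(0)=-1$) and case (4) (with $g$ odd and $f(0)=1$)—matching the two statements that assert the middle coefficient vanishes, and consistent with the absence of such a claim in (2) and (5).

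There is no serious obstacle in this argument; it is a careful bookkeeping of signs through one coefficient comparison. The single subtlety worth flagging explicitly is the observation that the parity of $g$ affects the conclusion only through the middle index $d=g$, the unique index satisfying $d = 2g-d$, where the symmetric relation degenerates into a constraint on a single coefficient.
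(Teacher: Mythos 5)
Your proof is correct and follows exactly the route the paper intends: the paper states the lemma as an immediate consequence of the functional equation $f(t) = \pm t^{\deg(f)}f(-t^{-1})$, and your coefficient comparison $a_d = s(-1)^d a_{2g-d}$, with $s$ identified as $f(0)$ by evaluating at $d=0$, is precisely the bookkeeping the paper leaves implicit. The observation that the parity of $g$ enters only through the self-paired index $d=g$ is exactly the right way to see why the middle coefficient vanishes in cases (3) and (4) and not in (2) and (5).
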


\begin{proof}
Let~$f(t) = a_{2g}t^{2g} + \cdots + a_0$ be a skew-reciprocal polynomial. 
The polynomial relation~$f(t) = \pm t^{2g}f(-t^{-1})$ given by skew-reciprocity
translates to the relation~$$a_d = \pm (-1)^{2g-d} a_{2g-d} = \pm (-1)^da_{2g-d}$$ 
for each pair coefficients~$a_d$ and~$a_{2g-d}$. Clearly, the coefficients are 
symmetric up to a possible sign that alternates between~$+1$ and~$-1$ as we change
the index~$d$ of the coefficient by one. In particular, the sign is the same 
for all even~$d$ and it is the same for all odd~$d$. Now recall that~$f(t)$ is monic, 
that is,~$a_{2g}=1$. In this case,~$f(0)=1$ means that 
the coefficients~$a_d$ and~$a_{2g-d}$ agree for~$d=0$ and hence all even~$d$, 
and they differ by a sign for odd~$d$. Similarly,~$f(0)=-1$ means that 
the coefficients differ by a sign for~$d=0$ and hence for all even~$d$, and they 
agree for even~$d$. Finally, the middle coefficient~$a_g$ needs to be zero if~$a_d$ and~$a_{2g-d}$ 
differ by a sign for all~$d$ with the same parity as~$g$. This distinction yields 
the four different cases (2)--(5) described in the statement of Lemma~\ref{srec_constraints}. 
\end{proof}

\begin{example}
\label{example}
\emph{
To see the main proof ideas applied to the simplest nontrivial example, we now determine which spectral radii are 
obtained by skew-reciprocal matrices~$A\in\mathrm{GL}_{2g}(\mathbb{Z})$ with curve graph~$G_A = 2A_1$, 
which is the graph with two isolated vertices. In this case~$Q(t) = 1-t^a-t^b$, where~$a$ and~$b$ 
are the weights of the vertices. The polynomial~$Q(t)$ is of degree~$2g$, and without loss of 
generality we assume~$b=2g$. The only way to have the moduli of the coefficients symmetrically 
distributed as in (1) of Lemma~\ref{srec_constraints} is if~$a=g$. Therefore, the only clique polynomial 
we possibly obtain is~$Q(t) = 1-t^g-t^{2g}$. Hence, the only characteristic polynomial we possibly 
obtain is~$t^{2g} - t^g -1$. We make the following observations: 
\begin{enumerate}
\item if~$A$ is primitive, then~$g=1$. Indeed, otherwise the characteristic polynomial is a polynomial in~$t^g$ 
with~$g>1$. Such a polynomial cannot be the characteristic polynomial of a primitive matrix. On the other 
hand, for~$g=1$, the polynomial~$t^2-t-1$ is the minimal polynomial of the golden ratio, realised as the 
characteristic polynomial of the matrix~$\begin{pmatrix} 0 & 1 \\ 1 & 1 \end{pmatrix}$.
\item For~$g$ is odd, the polynomial~$t^{2g}-t^{g}-1$ is the characteristic polynomial of a 
nonnegative irreducible matrix in~$\mathrm{GL}_{2g}(\mathbb{Z})$, namely a standard companion matrix. 
For example,~$t^6-t^3-1$ is the characteristic polynomial of the matrix
\[
\begin{pmatrix}
0 & 1 & 0 & 0 & 0 & 0 \\
0 & 0 & 1 & 0 & 0 & 0 \\
0 & 0 & 0 & 1 & 0 & 0 \\
0 & 0 & 0 & 0 & 1 & 0 \\
0 & 0 & 0 & 0 & 0 & 1 \\
1 & 0 & 0 & 1 & 0 & 0
\end{pmatrix},
\]
which is irreducible.
\item For even~$g$, we note that since the constant coefficient of the characteristic polynomial of~$A$ 
is negative, we are in case (3) of Lemma~\ref{srec_constraints}. In particular, the coefficient of~$t^g$ should be zero 
instead of~$-1$. This means that for even~$g$, there are no skew-reciprocal matrices~$A\in\mathrm{GL}_{2g}(\mathbb{Z})$ 
with curve graph~$2A_1$.
\end{enumerate}
In summary, we obtain that the following spectral radii can be realised for 
matrices~$A\in\mathrm{GL}_{2g}(\mathbb{Z})$ with curve graph~$2A_1$: 
\begin{enumerate}
\item[(i)] Among primitive skew-reciprocal matrices~$A\in\mathrm{GL}_{2g}(\mathbb{Z})$, 
only the golden ratio is realised, for~$g=1$. For~$g>1$, there are no primitive skew-reciprocal matrices with 
curve graph~$2A_1$. 
\item[(ii)] Among nonnegative irreducible skew-reciprocal~$A\in\mathrm{GL}_{2g}(\mathbb{Z})$, 
the largest root of the polynomial~$t^{2g}-t^g-1$ is realised, for odd~$g$. For even~$g$, there are no
nonnegative irreducible skew-reciprocal matrices~$A\in\mathrm{GL}_{2g}(\mathbb{Z})$ with curve graph~$2A_1$.  
\end{enumerate}
}
\end{example}

\subsection{The irreducible case}

\subsubsection{The case of odd~$g$}

\begin{prop}
Let~$g\ge1$ odd.
Among all skew-reciprocal nonegative irreducible matrices~$A\in\mathrm{GL}_{2g}(\mathbb{Z})$, 
the minimal spectral radius~$>1$ is realised by the largest root~$\lambda_{2g}$ 
of the polynomial~$t^{2g}-t^{g}-1$. 
\end{prop}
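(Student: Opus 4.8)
The plan is to prove the two halves separately: first realise the value~$\lambda_{2g}$ by an explicit matrix, and then show that no skew-reciprocal nonnegative irreducible matrix in~$\mathrm{GL}_{2g}(\mathbb{Z})$ has spectral radius in the interval~$(1,\lambda_{2g})$.

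For the realisation, I would take the companion matrix~$C$ of~$p(t)=t^{2g}-t^{g}-1$. It has entries in~$\{0,1\}$, determinant~$-1$, and its directed graph~$\Gamma_C$ contains a Hamiltonian cycle, so~$C$ is nonnegative, lies in~$\mathrm{GL}_{2g}(\mathbb{Z})$, and is irreducible. A direct inspection shows that~$\Gamma_C$ has exactly two directed simple closed curves — the Hamiltonian cycle of length~$2g$ and a cycle of length~$g$ — and that these share a vertex; hence~$G_C=2A_1$ with vertex weights~$g$ and~$2g$, its clique polynomial is~$1-t^{g}-t^{2g}$, and~$\chi_C=p$. One checks that~$p(t)=-t^{2g}p(-t^{-1})$ for odd~$g$, so~$p$ is skew-reciprocal (equivalently,~$p$ satisfies case~(5) of Lemma~\ref{srec_constraints}). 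Substituting~$s=t^{g}$ in~$p$ shows~$\lambda_{2g}^{g}=\varphi$, so~$\lambda_{2g}$ is indeed realised by a matrix of the required type.

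For minimality, I would suppose towards a contradiction that~$A$ is skew-reciprocal, nonnegative and irreducible with~$1<\rho(A)<\lambda_{2g}$. Since~$\lambda_{2g}^{2g}=\varphi^{2}\approx 2.618<8$, the normalised spectral radius satisfies~$\rho(A)^{2g}<\varphi^{2}<8$. Irreducibility makes~$\Gamma_A$ strongly connected and~$G_A'$ connected, so McMullen's classification of graphs with small growth applies~\cite{McM} and~$G_A$ is one of the graphs in Figure~\ref{minimalrates}. Because~$\lambda(G_A)$ is a lower bound for~$\rho(A)^{2g}$, we get~$\lambda(G_A)<\varphi^{2}<3$; reading off the growth rates in Figure~\ref{minimalrates}, every listed graph other than~$2A_1$ has~$\lambda(G)\ge 3$. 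Thus~$G_A=2A_1$, and writing~$a,b\in\{1,\dots,2g\}$ for the two curve weights gives~$\chi_A(t)=t^{2g}-t^{2g-a}-t^{2g-b}$.

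It then remains to pin down~$a$ and~$b$. The condition~$\det(A)=\pm1$ forces the constant term of~$\chi_A$ to be~$\pm1$, which happens precisely when exactly one weight equals~$2g$; say~$b=2g$, so~$\chi_A(t)=t^{2g}-t^{2g-a}-1$ with~$1\le a\le 2g-1$. Now Lemma~\ref{srec_constraints}(5), applied with~$g$ odd and constant term~$-1$, pairs the coefficient of~$t^{2g-a}$ with a necessarily nonzero coefficient of~$t^{a}$; since the only nonzero coefficients of~$\chi_A$ occur at~$t^{2g}$,~$t^{2g-a}$ and~$t^{0}$, this is compatible only if~$a=2g-a$, that is,~$a=g$. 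Hence~$\chi_A=t^{2g}-t^{g}-1$ and~$\rho(A)=\lambda_{2g}$, contradicting~$\rho(A)<\lambda_{2g}$. Combined with the realisation, this shows~$\lambda_{2g}$ is the minimal spectral radius~$>1$. The argument is short precisely because the target value~$\varphi^{2}$ is so small that the whole McMullen list collapses to~$2A_1$; the only steps needing genuine care are verifying that~$\Gamma_C$ has exactly the two claimed simple closed curves and that the determinant constraint forces one weight to equal~$2g$, after which skew-reciprocity does the rest.
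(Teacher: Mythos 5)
Your proof is correct, but the minimality half follows a genuinely different route from the paper's. The paper disposes of the lower bound with a squaring trick: if~$A$ is skew-reciprocal, then~$A^2$ is reciprocal and nonnegative, so McMullen's bound~$\varphi^4$ on normalised spectral radii of nonnegative reciprocal integer matrices~\cite{McM} yields~$\rho(A)^{2g}\ge\varphi^2=(\lambda_{2g})^{2g}$ immediately; the companion-matrix realisation is then the same as yours. You instead invoke the curve-graph classification --- the machinery the paper reserves for the even-$g$ and primitive cases --- noting that~$\rho(A)^{2g}<\varphi^2<3$ collapses Figure~\ref{minimalrates} to~$G_A=2A_1$, and then pinning the weights to~$(g,2g)$ via the determinant condition and Lemma~\ref{srec_constraints}; this is sound, and your one implicit step (that~$\rho(A)>1$ together with~$\rho(A)^{2g}<8$ places~$G_A$ in the classification) is the same step the paper itself takes in its even-$g$ argument. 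As for trade-offs: the paper's argument is shorter and avoids the classification entirely for this case, though it does rely on McMullen's reciprocal bound applying to matrices that are merely nonnegative, since~$A^2$ need not inherit irreducibility from~$A$. Your argument buys uniqueness: it shows that~$t^{2g}-t^g-1$ is the only characteristic polynomial a minimiser can have, a fact the paper records only in the remark following its proof --- by essentially the identical~$2A_1$ analysis you give --- and it keeps the odd and even irreducible cases structurally parallel.
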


\begin{proof}
Let~$A\in\mathrm{GL}_{2g}(\mathbb{Z})$ be a nonnegative skew-reciprocal matrix. 
Then its square~$A^2$ is a reciprocal matrix. In particular, by McMullen's result on 
minimal normalised spectral radii for nonnegative reciprocal matrices~\cite{McM} we know that 
its normalised spectral radius must be at least~$\varphi^4$. Therefore, the normalised 
spectral radius of~$A$ must be at least~$\varphi^2$, which incidentally 
equals~$(\lambda_{2g})^{2g}$. In order to finish the proof, it therefore suffices to realise 
the polynomial~$t^{2g}-t^{g}-1$ as the characteristic polynomial of a nonnegative irreducible matrix 
in~$\mathrm{GL}_{2g}(\mathbb{Z})$. 
This is straightforward, as it can be achieved by a standard companion matrix, 
see (2) in Example~\ref{example} for the case~$g=3$. 
\end{proof}

\begin{remark}\emph{
It actually follows from McMullen's classification that the polynomial~$t^{2g}-t^{g}-1$ 
is the unique characteristic polynomial that can appear for a matrix 
that minimises the spectral radius. Indeed, only the graph~$2A_1$ can appear as curve graph, 
with clique polynomial~$1-t^a-t^b$. For this polynomial to be skew-reciprocal we must
either have~$a=2g$ and~$b=g$ or~$b=2g$ and~$a=g$. 
Both cases yield our candidate polynomial.}
\end{remark}

\subsubsection{The case of even~$g$}

We note that the above proof does not work for even~$g$: if~$g$ is even, then 
the polynomial~$t^{2g}-t^{g}-1$ is not skew-reciprocal, as noted in Example~\ref{example}.
We instead have the following minimisers.

\begin{prop}
\label{irred_even_prop}
Let~$g\ge2$ even.
Among all skew-reciprocal nonnegative irreducible matrices~$A\in\mathrm{GL}_{2g}(\mathbb{Z})$, 
the minimal spectral radius~$>1$ is realised by the largest root~$\lambda_{2g}$ 
of the polynomial~$t^{2g}-t^{g+1}-t^{g-1}-1$. 
\end{prop}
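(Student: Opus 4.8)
The plan is to mirror the two-step structure of the odd case, but with the crucial adjustment that the naive minimiser $t^{2g}-t^g-1$ is no longer admissible. The argument has two halves: a lower bound coming from McMullen's classification, and an explicit realisation of the candidate polynomial.

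For the lower bound, I would again begin by observing that if $A$ is nonnegative skew-reciprocal and irreducible, then $A^2$ is nonnegative and reciprocal, so McMullen's result forces the normalised spectral radius of $A^2$ to be at least $\varphi^4$, giving $\rho(A)^{2g}\ge\varphi^2$ as a first crude bound. However, for even $g$ this bound is \emph{not} attained, precisely because Lemma~\ref{srec_constraints} rules out the polynomial realising $\varphi^2$. So the real work is to show that the candidate value $(\lambda_{2g})^{2g}$, where $\lambda_{2g}$ is the largest root of $t^{2g}-t^{g+1}-t^{g-1}-1$, is the true minimum. For this I would invoke McMullen's classification theorem directly: since the normalised spectral radius of any competitor is below $8$ (one checks $(\lambda_{2g})^{2g}<8$ for all even $g\ge 2$, as it converges to $3+2\sqrt2\approx 5.82$), its curve graph $G_A$ must lie among the finite list $A_2^\ast, A_2^{\ast\ast}, A_2^{\ast\ast\ast}, A_3^\ast, Y^\ast, nA_1$ of Figure~\ref{minimalrates}. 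I would then enumerate the clique polynomials of these graphs and, for each, impose the skew-reciprocity constraints of Lemma~\ref{srec_constraints} together with the sign condition $f(0)=-1$ forced on an even-$g$ polynomial whose middle coefficient must vanish. Because a nonnegative matrix has a clique polynomial of the form $Q(t)=\sum_K(-1)^{|K|}t^{w(K)}$ with the all-empty term $+1$, reciprocating gives a characteristic polynomial whose coefficients are constrained in sign, and I expect only the graph yielding $t^{2g}-t^{g+1}-t^{g-1}-1$ to survive among those achieving the minimal growth rate. The graph $A_2$ (clique polynomial $1-t^a-t^b$) cannot work here, since its reciprocal $t^{2g}-t^{2g-a}-t^{2g-b}$ has a negative constant term only if $a$ or $b$ equals $2g$, forcing the other exponent to be $g$ and producing the forbidden non-skew-reciprocal polynomial; so a graph with more structure (two symmetric off-central terms, such as $A_2^{\ast}$ or $A_3^\ast$) is needed.

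For the realisation half, I would exhibit a single explicit nonnegative integer matrix whose characteristic polynomial is $t^{2g}-t^{g+1}-t^{g-1}-1$ and which is irreducible, i.e.\ whose associated graph $\Gamma_A$ is strongly connected. A companion matrix will not be nonnegative here because of the $-t^{g+1}$ and $-t^{g-1}$ terms, so instead I would build the matrix from the curve graph directly: I would take the graph whose clique polynomial is $1-t^{g-1}-t^{g+1}-t^{2g}$ (or the appropriate variant with a clique correction) and read off a nonnegative adjacency-type matrix realising it, then verify strong connectedness and that $\det=\pm1$ so that $A\in\mathrm{GL}_{2g}(\mathbb{Z})$. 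Concretely, I expect a cycle-type graph on $2g$ vertices with two symmetric extra chords of lengths $g-1$ and $g+1$ to produce exactly the three simple closed curves of weights $g-1$, $g+1$, and $2g$ needed, with the edge structure of the curve graph $G$ matching $A_2^\ast$ or $A_3^\ast$.

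The main obstacle, and the place I would spend the most care, is the enumeration step: for each graph in McMullen's list I must compute the clique polynomial, substitute arbitrary admissible weights, reciprocate to get $\chi_A$, and then check the skew-reciprocity parity conditions of Lemma~\ref{srec_constraints} case (3) (even $g$, $f(0)=-1$), discarding every polynomial that either fails to be skew-reciprocal or has strictly larger growth rate than $(\lambda_{2g})^{2g}$. The subtlety is that several graphs realise the same minimal growth rate $3+2\sqrt2$ asymptotically, so ruling them out requires comparing the \emph{exact} spectral radii at finite even $g$, not merely the limiting value, and confirming that among all surviving skew-reciprocal candidates the polynomial $t^{2g}-t^{g+1}-t^{g-1}-1$ has the smallest largest root. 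I would handle the exact comparison by the substitution used in the proof of Theorems~\ref{prim_srwins} and~\ref{irred_srwins}, writing the relevant quantity as the largest root of $f(t)=t^2-t^{1+1/g}-t^{1-1/g}-1$ and using monotonicity in $g$.
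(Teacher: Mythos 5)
Your overall two-step strategy (McMullen's classification for the lower bound, plus an explicit realisation) is the same as the paper's, but both halves contain concrete errors or gaps. The most clear-cut error is in the realisation half: your claim that ``a companion matrix will not be nonnegative here because of the $-t^{g+1}$ and $-t^{g-1}$ terms'' is backwards. In the companion matrix of a monic polynomial the nontrivial column consists of the \emph{negatives} of the coefficients, so the terms $-t^{g+1}$, $-t^{g-1}$, $-1$ produce entries $+1$; a companion matrix fails to be nonnegative precisely when the polynomial has \emph{positive} non-leading coefficients. The companion matrix of $t^{2g}-t^{g+1}-t^{g-1}-1$ is a $0$--$1$ matrix whose digraph is a $2g$-cycle with two chords creating cycles of lengths $g+1$ and $g-1$, hence strongly connected, and its determinant is $\pm1$. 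This is exactly the paper's one-line realisation. Your substitute construction (a cycle with two chords read off from the curve graph) is in fact this very matrix, so the realisation can be repaired, but the stated reason for rejecting the companion matrix is wrong.

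The enumeration half, which you correctly identify as the main work, is only sketched, and the sketch has gaps that would derail it. First, you restrict attention to case (3) of Lemma~\ref{srec_constraints} (even $g$, $f(0)=-1$, vanishing middle coefficient), but that sign is not forced on competitors: skew-reciprocal polynomials with $f(0)=+1$ genuinely arise from the graphs $A_2^{\ast}$, $A_2^{\ast\ast}$ and $A_3^{\ast}$ --- for instance $1-t^{g-d}-t^{g}-t^{g+d}+t^{2g}$ with $d$ even is skew-reciprocal (and simultaneously reciprocal). These are eliminated not by parity conditions but by magnitude, using McMullen's quantitative results for reciprocal weights (Theorem~7.3 of~\cite{McM} giving the bound $\varphi^4$, and the bound $(2+\sqrt{3})^2$ for $A_2^{\ast\ast}$); nothing in your plan supplies these inputs. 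Second, your proposed comparison tool --- monotonicity in $g$ of the largest root of $t^2-t^{1+1/g}-t^{1-1/g}-1$ --- compares the candidate across different dimensions, whereas the enumeration requires a comparison at \emph{fixed} $g$ within the family $t^{2g}-t^{g+d}-t^{g-d}-1$ coming from $3A_1$: one needs that the largest root is strictly increasing in $d$ (the paper cites Proposition~3.2 in~\cite{LS}) to conclude that $d=1$ is optimal. Third, for $A_2^{\ast\ast}$ and $A_3^{\ast}$ with negative leading coefficient there are weight choices that survive all sign and parity constraints; the paper disposes of them by a surgery argument (realise $Q(t)$ by an explicit digraph, delete an edge to strictly decrease the spectral radius, and land in the already-settled $3A_1$ case), and your plan has no mechanism to handle these configurations.
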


\begin{proof}
The largest root~$\lambda_{2g}$ of the polynomial~$t^{2g}-t^{g+1}-t^{g-1}-1$ is clearly 
realised as the spectral radius of a nonnegative irreducible~$A\in\mathrm{GL}_{2g}(\mathbb{Z})$. 
Indeed, again we can achieve this by a standard companion matrix. We now note 
that~$(\lambda_{2g})^{2g}$ is a descending sequence converging to~$3+2\sqrt{2}$ and 
starting at~$\varphi^4 < 7$ for~$g=2$. In particular, we can finish the proof by showing 
that for even~$g\ge2$,~$\lambda_{2g}$ minimises the spectral radius among all skew-reciprocal 
nonnegative irreducible matrices~$A\in\mathrm{GL}_{2g}(\mathbb{Z})$ that have one of the 
graphs in Figure~\ref{minimalrates} except~$A_2^{\ast\ast\ast}$ or~$Y^\ast$ as their curve graph.

\begin{enumerate}
\item $G=2A_1$. As noted in Example~\ref{example}, there are no nonnegative irreducible skew-reciprocal
matrices~$A\in\mathrm{GL}_{2g}(\mathbb{Z})$ with~$g$ even and~$2A_1$ as their curve graph.
\item $G=3A_1$. In this case~$Q(t) = 1-t^a-t^b-t^c$. Without loss of generality 
we assume~$c=2g$. If we want the moduli of the coefficients symmetrically 
distributed, we are left with the options
\[
1-t^{g-d}-t^{g+d}-t^{2g}
\] 
for~$0\le d\le g$. The case~$d=g$ is ruled out as the resulting polynomial is a monomial. 
The case~$d=0$ is ruled out by Lemma~\ref{srec_constraints}, as above. 
It follows that~$1<d<g$. By Proposition~$3.2$ in~\cite{LS}, we know that the largest root 
of the reciprocal polynomial~$t^{2g}-t^{g+d}-t^{g-d}-1$ is a strictly increasing function of~$d$. 
So, the smallest spectral is obtained for~$d=1$, resulting in our candidate 
polynomial~$t^{2g}-t^{g+1}-t^{g-1}-1$. 
\item $G=4A_1$ or~$G=6A_1$. As for~$G=2A_1$, the number of terms is odd. 
The only way to have the moduli of the coefficients symmetrically 
distributed is to have a middle coefficient, a contradiction to Lemma~\ref{srec_constraints}. 
\item $G=5A_1$. In this case~$Q(t) = 1-t^a-t^b-t^c-t^d-t^e$. Without loss of generality~$e=2g$. 
As in the case~$G=3A_1$, there must be at least two paired terms of power~$\ne 0,g$. 
We assume without loss of generality that~$0<a<g<b = 2g-a<2g$. Since the polynomial reciprocal 
to~$Q(t)$ is realised by a standard companion matrix, we can delete its coefficients 
that correspond to the terms~$-t^c$ and~$-t^d$ and obtain a matrix with strictly smaller 
spectral radius and characteristic polynomial~$t^{2g}-t^b-t^a-1$, 
where the~$a,b$ and~$g$ satisfy~$0<a<g<b = 2g-a<2g.$ We have shown in the case~$G=3A_1$ that the minimal 
spectral radius obtained by a matrix with such a characteristic polynomial is our 
candidate~$\lambda_{2g}$. 
\item $G=A_2^\ast$. In this case,~$Q(t) = 1-t^a - t^b - t^c + t^{a+b}$, where~$c$ is the weight 
on the isolated vertex of~$A_2^\ast$. Since there are three terms not paired with the constant
term, we deduce there must be a nonvanishing middle coefficient. 
By Lemma~\ref{srec_constraints}, this means 
that the leading coefficient of~$Q(t)$ is positive, so we have~$a+b = 2g$ and~$c=g$. 
We note that the possibilities that remain are~$1-t^{g-d}-t^g -t^{g+d}+ t^{2g}$, which are 
reciprocal. Theorem 7.3 by McMullen~\cite{McM} provides that the normalised spectral 
radius is~$\ge \varphi^4 = \lambda_4$ and~$>\lambda_{2g}$ for~$g>2$. 
\item $G=A_2^{\ast\ast}$. In this case,~$Q(t) = 1- t^a-t^b - t^c - t^d + t^{a+b}$, 
where~$c$ and~$d$ are the weights on the isolated vertices of~$A_2^{\ast\ast}$. 
If the leading coefficient of~$Q(t)$ is positive, then~$Q(t)$ is reciprocal. By McMullen's 
analysis of the curve graph~$A_2^{\ast\ast}$ for reciprocal weights, a normalised spectral 
radius in this case must be~$\ge (2+\sqrt{3})^2 > \lambda_{2g}$. It remains to consider the case 
where the leading coefficient of~$Q(t)$ is negative. Without loss of generality, 
we assume~$c=2g$. We have the following conditions on the other parameters~$a,b,d$.
\begin{itemize}
\item $a+b < 2g$ and hence~$a+b \le 2g-2$. Indeed,~$t^{a+b}$ appears with a positive sign
and must be paired with a term~$-t^x$ with negative sign, for~$x=a,b$ or~$d$. In particular,
Lemma~\ref{srec_constraints} implies that~$a+b$ must be even.
\item either~$d+a = 2g$ or~$d+b = 2g$. We assume without loss of generality that~$d+b = 2g$. 
Thus,~$1\le a < b,d < a+b \le 2g-2$. 
\end{itemize}
Now consider the directed graph~$\Gamma_{a,b,d}$,  
\begin{figure}[h]
\def\svgwidth{130pt}
\begingroup%
  \makeatletter%
  \providecommand\color[2][]{%
    \errmessage{(Inkscape) Color is used for the text in Inkscape, but the package 'color.sty' is not loaded}%
    \renewcommand\color[2][]{}%
  }%
  \providecommand\transparent[1]{%
    \errmessage{(Inkscape) Transparency is used (non-zero) for the text in Inkscape, but the package 'transparent.sty' is not loaded}%
    \renewcommand\transparent[1]{}%
  }%
  \providecommand\rotatebox[2]{#2}%
  \newcommand*\fsize{\dimexpr\f@size pt\relax}%
  \newcommand*\lineheight[1]{\fontsize{\fsize}{#1\fsize}\selectfont}%
  \ifx\svgwidth\undefined%
    \setlength{\unitlength}{210.19276821bp}%
    \ifx\svgscale\undefined%
      \relax%
    \else%
      \setlength{\unitlength}{\unitlength * \real{\svgscale}}%
    \fi%
  \else%
    \setlength{\unitlength}{\svgwidth}%
  \fi%
  \global\let\svgwidth\undefined%
  \global\let\svgscale\undefined%
  \makeatother%
  \begin{picture}(1,0.98887066)%
    \lineheight{1}%
    \setlength\tabcolsep{0pt}%
    \put(0,0){\includegraphics[width=\unitlength]{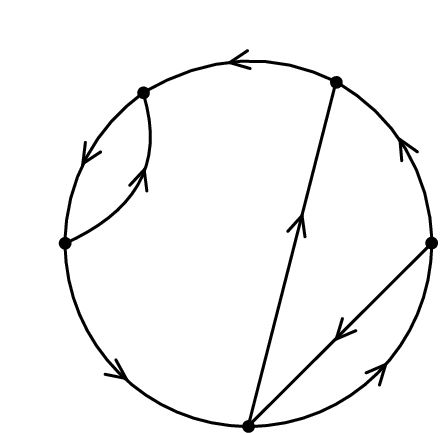}}%
    \put(0.92180472,0.08203801){\makebox(0,0)[lt]{\lineheight{1.25}\smash{\begin{tabular}[t]{l}$b-1$\end{tabular}}}}%
    \put(0.96167021,0.69197235){\makebox(0,0)[lt]{\lineheight{1.25}\smash{\begin{tabular}[t]{l}2\end{tabular}}}}%
    \put(0.30741587,0.8959634){\makebox(0,0)[lt]{\lineheight{1.25}\smash{\begin{tabular}[t]{l}$2g-a-b-1$\end{tabular}}}}%
    \put(0.02160946,0.68619438){\makebox(0,0)[lt]{\lineheight{1.25}\smash{\begin{tabular}[t]{l}$a-1$\end{tabular}}}}%
  \end{picture}%
\endgroup%

\caption{The directed graph~$\Gamma_{a,b,d}$.}
\label{Gamma_abd}
\end{figure}
where a weight~$w$ on an edge indicates~$w-1$ additional vertices placed on the edge. 
We note that the clique polynomial of the curve graph of ~$\Gamma_{a,b,d}$ is~$Q(t)$. 
Furthermore, deleting 
the edge of length~$1$ in~$\Gamma_{a,b,d}$ that forms the simple closed curve of 
length~$a$ strictly decreases 
the spectral radius of the associated adjacency matrix. Furthermore, the new curve 
graph is~$3A_1$ and the new clique polynomial is obtained by removing the terms~$-t^a$ 
and~$t^{a+b}$, and hence skew-reciprocal. This is a case we have already dealt with.
\item $G=A_3^\ast$. In this case,~$Q(t) = 1- t^a-t^b - t^c - t^d + t^{a+b}+t^{b+c}$, where~$d$
is the weigth of the isolated vertex and~$b$ is the weight of the vertex of degree 
two of~$A_3^\ast$. Since the number of summands is odd, the middle term must have a 
nonvanishing coefficient. By Lemma~\ref{srec_constraints}, the leading coefficient of~$Q(t)$ is 
positive, and we assume without loss of generality that~$b+c = 2g$. Note that since the 
coefficients of~$t^b$ and~$t^c$ have the same sign,~$b$ and~$c$ need to be even. 
We now distinguish three cases: either~$a+b = g$,~$a = g$, or~$d=g$. 
\begin{itemize}
\item if~$a+b = g$, then~$Q(t) = 1 - t^a - t^b + t^g - t^{2g-a} -t^{2g-b} + t^{2g}$, which is 
reciprocal. By McMullen's analysis of the curve graph~$A_3^\ast$ for reciprocal weights~\cite{McM}, 
the normalised spectral radius must either be~$>12.5 > \varphi^4$, or~$Q(t)$ is among the examples 
arising from~$A_2^\ast$, a case we have dealt with already.
\item if~$a=g$, then~$d+a+b = 2g$.  Since~$b$ and~$g$ are even, so must be~$a+b = g+b$, 
and hence also~$d$. By Lemma~\ref{srec_constraints}, the coefficients of~$t^d$ and 
of~$t^{a+b}$ should have the same sign, a contradiction. 
\item if~$d=g$, we have~$2a + b = 2g$, and hence~$a\le g-1$. 
Let~$\Gamma'_{a,b,c}$be defined as in Figure~\ref{Gamma2_abc},
\begin{figure}[h]
\def\svgwidth{240pt}
\begingroup%
  \makeatletter%
  \providecommand\color[2][]{%
    \errmessage{(Inkscape) Color is used for the text in Inkscape, but the package 'color.sty' is not loaded}%
    \renewcommand\color[2][]{}%
  }%
  \providecommand\transparent[1]{%
    \errmessage{(Inkscape) Transparency is used (non-zero) for the text in Inkscape, but the package 'transparent.sty' is not loaded}%
    \renewcommand\transparent[1]{}%
  }%
  \providecommand\rotatebox[2]{#2}%
  \newcommand*\fsize{\dimexpr\f@size pt\relax}%
  \newcommand*\lineheight[1]{\fontsize{\fsize}{#1\fsize}\selectfont}%
  \ifx\svgwidth\undefined%
    \setlength{\unitlength}{302.43145046bp}%
    \ifx\svgscale\undefined%
      \relax%
    \else%
      \setlength{\unitlength}{\unitlength * \real{\svgscale}}%
    \fi%
  \else%
    \setlength{\unitlength}{\svgwidth}%
  \fi%
  \global\let\svgwidth\undefined%
  \global\let\svgscale\undefined%
  \makeatother%
  \begin{picture}(1,0.55257245)%
    \lineheight{1}%
    \setlength\tabcolsep{0pt}%
    \put(0,0){\includegraphics[width=\unitlength]{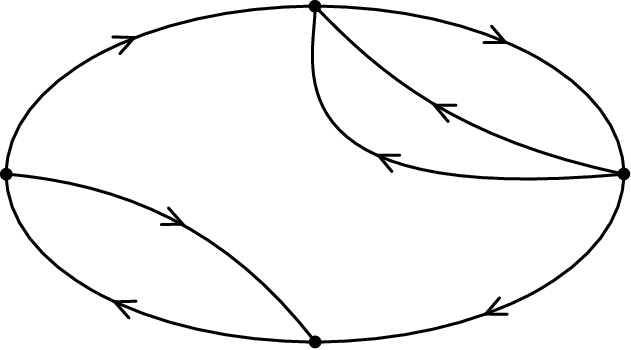}}%
    \put(0.00818432,0.02206673){\makebox(0,0)[lt]{\lineheight{1.25}\smash{\begin{tabular}[t]{l}$g-a-1$\end{tabular}}}}%
    \put(0.19665684,0.26509711){\makebox(0,0)[lt]{\lineheight{1.25}\smash{\begin{tabular}[t]{l}$b+a+1-g$\end{tabular}}}}%
    \put(0.83731881,0.4913928){\makebox(0,0)[lt]{\lineheight{1.25}\smash{\begin{tabular}[t]{l}$a-1$\end{tabular}}}}%
    \put(0.65157812,0.22107392){\makebox(0,0)[lt]{\lineheight{1.25}\smash{\begin{tabular}[t]{l}$c-a+1$\end{tabular}}}}%
  \end{picture}%
\endgroup%

\caption{The directed graph~$\Gamma'_{a,b,c}$.}
\label{Gamma2_abc}
\end{figure}
where the edge of weight~$g-a-1$ is contracted in case~$a=g-1$. We note that the 
clique polynomial of the curve graph of~$\Gamma'_{a,b,c}$ is exactly our~$Q(t)$, 
where~$d=g$. Deleting the edge of length one that forms the simple closed curve of length~$a$
strictly decreases the spectral radius of the associated adjacency matrix. The clique polynomial 
we obtain after this deletion of an edge is of the form~$1-t^b-t^g -t^{2g-b} + t^{2g}$,
a case we have dealt with in our study of~$G=A_2^\ast$. 
\end{itemize}
\end{enumerate}
\end{proof}

\begin{remark}
\label{uniqueness_even}
\emph{
Our proof of Proposition~\ref{irred_even_prop} actually shows that for~$g\ne2$, 
the polynomial $t^{2g}-t^{g+1}-t^{g-1}-1$
is the unique characteristic polynomial that can appear for a matrix minimising the spectral 
radius. Except for~$g=2$, where we have a second possibility (appearing in (5)) for the characteristic 
polynomial:~$t^4-3t^2+1$. In the case~$g=2$, both minimising polynomials are divisible by the minimal polynomial of 
the golden ratio.}
\end{remark}

\begin{remark}
\emph{
In the cases (4), (6) and (7) of the proof of Proposition~\ref{irred_even_prop}, 
we construct irreducible matrices and reduce some of their coefficients 
in order to obtain irreducible matrices with strictly smaller spectral radii that belong to other cases we have 
already dealt with. For a quicker proof of Proposition~\ref{irred_even_prop}, we could use the
monotonicity property for the spectral radius formulated by McMullen~\cite{McM} on the level of 
the weighted curve graph. However, this monotonicity is not strict in general. In particular, this 
proof strategy seems to fail to provide the uniqueness of the minimising characteristic polynomials 
described in Remark~\ref{uniqueness_even}.}
\end{remark}

\subsection{The primitive case}
\subsubsection{The case of even~$g$} 

\begin{prop}
Let~$g\ge2$ even.
Among all skew-reciprocal primitive matrices~$A\in\mathrm{GL}_{2g}(\mathbb{Z})$, 
the minimal spectral radius~$>1$ is realised by the largest root~$\mu_{2g}$ 
of the polynomial~$t^{2g}-t^{g+1}-t^{g-1}-1$. 
\end{prop}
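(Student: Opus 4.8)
The plan is to exploit the fact that the primitive matrices form a subclass of the nonnegative irreducible ones, so that the lower bound comes essentially for free from the work already done. Indeed, every skew-reciprocal primitive matrix $A \in \mathrm{GL}_{2g}(\mathbb{Z})$ is in particular nonnegative and irreducible, so by Proposition~\ref{irred_even_prop} its spectral radius $>1$ is at least $\lambda_{2g}$, the largest root of $t^{2g} - t^{g+1} - t^{g-1} - 1$. Since the minimising polynomial claimed here is exactly the same one, we automatically have $\mu_{2g} = \lambda_{2g}$, and the entire content of the proposition reduces to producing a single \emph{primitive} matrix that realises this value.

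First I would recall why $f(t) = t^{2g} - t^{g+1} - t^{g-1} - 1$ is an admissible candidate for even $g$: a direct substitution gives $f(t) = -t^{2g} f(-t^{-1})$, so $f$ is skew-reciprocal with $f(0) = -1$ and vanishing middle coefficient, in agreement with case~(3) of Lemma~\ref{srec_constraints}. In the irreducible case this polynomial was realised by a standard companion matrix~$A$, which is nonnegative, has determinant $\pm 1$ and hence lies in $\mathrm{GL}_{2g}(\mathbb{Z})$, and is irreducible. The only point requiring genuine work is to upgrade irreducibility to primitivity, i.e.\ to verify aperiodicity of the strongly connected graph~$\Gamma_A$.

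For this I would read off the cycle structure of~$\Gamma_A$ directly. Via the clique polynomial $1 - t^{g-1} - t^{g+1} - t^{2g}$ of the curve graph $3A_1$, the three off-constant exponents of~$f$ correspond to three simple closed curves in~$\Gamma_A$ of lengths $g-1$, $g+1$ and $2g$, all passing through a common vertex. Since the period of a strongly connected digraph divides the length of every cycle, it suffices to observe that these three lengths are coprime: as $g$ is even, $g-1$ is odd, so $\gcd(g-1, g+1) = \gcd(g-1, 2) = 1$. Hence the period divides $1$, the graph is aperiodic, and $A$ is primitive. This produces a skew-reciprocal primitive matrix with spectral radius $\mu_{2g} = \lambda_{2g}$ and completes the proof.

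The step I expect to be the main obstacle — indeed the only non-formal step — is the primitivity verification: irreducibility of the companion matrix is automatic from $f(0) \ne 0$, but aperiodicity is precisely the extra constraint separating the primitive from the irreducible case, and it is exactly here that the parity hypothesis on~$g$ is used, through the coprimality of the odd numbers $g-1$ and $g+1$. I would make sure the three cycle lengths are correctly matched with the exponents of~$f$, though the gcd argument only requires their existence and not an exhaustive enumeration of all simple cycles.
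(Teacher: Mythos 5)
Your proposal is correct and follows essentially the same route as the paper: the lower bound is inherited from Proposition~\ref{irred_even_prop}, and the only real work is showing the companion matrix of $t^{2g}-t^{g+1}-t^{g-1}-1$ is primitive, which both you and the paper do by exhibiting directed cycles of lengths $g-1$, $g+1$, $2g$ and noting their gcd is $1$ because $g$ is even. Your coprimality computation ($\gcd(g-1,g+1)$ divides $2$ and $g-1$ is odd) is a trivially different but equally valid variant of the paper's (any common divisor of $2g$ and $g+1$ is odd, hence divides $g$ and $g+1$).
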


\begin{proof}
By Proposition~\ref{irred_even_prop}, we know that~$\mu_{2g}$ is actually the minimal 
spectral radius among all nonnegative irreducible matrices. In order to prove the result, it is enough to 
show that the polynomial~$t^{2g}-t^{g+1}-t^{g-1}-1$ is the characteristic polynomial of a 
primitive matrix. This is the case. Indeed, we can take the standard companion matrix for 
the polynomial and draw its directed adjacency graph. We directly see that there are 
directed cycles of length~$g-1,g+1$ and~$2g$. In order to show that the matrix is primitive, 
it suffices to show that their common greatest divisor is~$1$. Let~$n$ be a positive integer 
that divides both~$2g$ and~$g+1$. Since~$g$ is even,~$g+1$ is odd and so~$n$ has to 
to be odd as well. Now since~$n$ is odd and divides~$2g$, it divides~$g$. We have 
that~$n$ divides both~$g$ and~$g+1$ and therefore~$n=1$. This finishes the proof.
\end{proof}

\subsubsection{The case of odd~$g$}

\begin{prop}
Let~$g\ge3$ odd.
Among all skew-reciprocal primitive matrices~$A\in\mathrm{GL}_{2g}(\mathbb{Z})$, 
the minimal spectral radius~$>1$ is realised by the largest root~$\mu_{2g}$ 
of the polynomial~$t^{2g}-t^{g+2}-t^{g-2}-1$. 
\end{prop}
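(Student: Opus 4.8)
The plan is to follow the same two steps as in the previous propositions: realise $\mu_{2g}$ by a primitive skew-reciprocal matrix, and then prove the matching lower bound by running through McMullen's list of curve graphs (Figure~\ref{minimalrates}) as in the proof of Proposition~\ref{irred_even_prop}. For the realisation I would take the standard companion matrix of $t^{2g}-t^{g+2}-t^{g-2}-1$. This polynomial is skew-reciprocal by Lemma~\ref{srec_constraints}(5), since $g$ is odd and its constant coefficient is $-1$, and its directed adjacency graph has simple cycles of lengths $g-2$, $g+2$ and $2g$. As $g$ is odd, $g-2$ is odd, so $\gcd(g-2,g+2)=\gcd(g-2,4)=1$, whence $\gcd(g-2,g+2,2g)=1$ and the companion matrix is primitive. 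Thus $\mu_{2g}$ is attained, and it remains to bound from below.

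The bulk of the work is the lower bound for a primitive---hence nonnegative and irreducible---skew-reciprocal $A$, whose curve graph $G_A$ then lies on McMullen's list. The decisive new input, replacing the even-case appeal to Lemma~\ref{srec_constraints}, is a dichotomy for the graphs $nA_1$. Such a skew-reciprocal characteristic polynomial is either \emph{all-middle}, namely $t^{2g}-kt^g-1$ with every interior cycle of length $g$, or it contains an \emph{off-middle} pair $\{w,2g-w\}$ with $w\neq g$. In the all-middle case the real roots of $s^2-ks-1$ satisfy $r_+>1>|r_-|$, so $t^{2g}-kt^g-1$ has the $g\ge2$ roots $r_+^{1/g}\zeta$, $\zeta^g=1$, all of maximal modulus, and by Perron--Frobenius it is never the characteristic polynomial of a primitive matrix; this is precisely what kills the candidates $t^{2g}-t^g-1$ (the case $n=2$) and $t^{2g}-2t^g-1$, the latter realising the limiting value $3+2\sqrt2$. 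In the off-middle case I would, as in the treatment of $5A_1$ in Proposition~\ref{irred_even_prop}, delete from the companion matrix the coefficients of all length-$g$ cycles and of all but one off-middle pair, strictly decreasing the spectral radius and leaving a polynomial $t^{2g}-t^{g+d}-t^{g-d}-1$. The surviving pair, being part of a skew-reciprocal polynomial, has odd interior exponents by Lemma~\ref{srec_constraints}(5), so $d$ is even with $0<d<g$; by the monotonicity in $d$ of Proposition~3.2 in~\cite{LS} its root is at least the root at $d=2$, namely $\mu_{2g}$. Hence every primitive $nA_1$-matrix has spectral radius $\ge\mu_{2g}$, realised exactly by the $d=2$ companion matrix above.

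For the remaining graphs I would re-run McMullen's weight analysis with the odd-parity constraints of Lemma~\ref{srec_constraints}. A short computation shows that $A_2^\ast$ admits no skew-reciprocal weighting at all when $g$ is odd: a positive leading coefficient forces the forbidden middle term $-t^g$, while a negative one cannot be made symmetric about $g$. For $A_2^{\ast\ast}$ and $A_3^\ast$ the surviving possibilities are either reciprocal, where McMullen's reciprocal estimates $(2+\sqrt3)^2$ and $>12.5$~\cite{McM} exceed $(\mu_{2g})^{2g}$, or they reduce by an edge-deletion to the $3A_1$ case treated above. Finally, for odd $g\ge5$ one has $(\mu_{2g})^{2g}<4+2\sqrt2$, so the graphs $A_2^{\ast\ast\ast}$ and $Y^\ast$ are irrelevant and the argument is complete.

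I expect the genuine obstacle to be the case $g=3$. Here $(\mu_6)^6\approx8.19$ lies above both the cut-off $8$ of McMullen's classification and the value $4+2\sqrt2\approx7.46$ of $A_2^{\ast\ast\ast}$ and $Y^\ast$, so these quantitative shortcuts are unavailable: a primitive skew-reciprocal $6\times6$ matrix whose curve graph is \emph{not} on the list only satisfies $\rho(A)^6\ge8$, which is weaker than the required $\rho(A)^6\ge(\mu_6)^6$. One must therefore rule out by hand the existence of a primitive skew-reciprocal matrix with spectral radius in the very narrow interval $[\sqrt2,\mu_6)\approx[1.4142,1.4199)$. Since such a small spectral radius forces an extremely sparse directed graph on six vertices, I would settle this by a finite inspection of these graphs, together with a direct check of the finitely many admissible weightings of $A_2^{\ast\ast\ast}$ and $Y^\ast$; the remaining list graphs for $g=3$ are already covered by the preceding paragraphs.
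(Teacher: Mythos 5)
For odd $g\ge5$ your argument is essentially the paper's: same companion-matrix realisation with the same gcd computation, and the same strategy of running the skew-reciprocity parity constraints of Lemma~\ref{srec_constraints} through McMullen's list, with the graphs $A_2^{\ast\ast\ast}$ and $Y^\ast$ discarded because $(\mu_{2g})^{2g}<4+2\sqrt{2}$. Your unified treatment of the $nA_1$ graphs (all-middle weightings killed by Perron--Frobenius since $t^{2g}-kt^g-1$ has $g$ roots of maximal modulus; otherwise delete down to a single off-middle pair, which must have $d$ even by parity, then apply the monotonicity of Proposition~3.2 in~\cite{LS}) is a clean repackaging of what the paper does case by case for $2A_1,\dots,6A_1$. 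One cosmetic inaccuracy: for odd $g$ the non-reciprocal weightings of $A_2^{\ast\ast}$ and $A_3^\ast$ do not ``reduce by edge-deletion to $3A_1$''; the paper shows they simply cannot exist, by parity contradictions with Lemma~\ref{srec_constraints} (edge-deletion is only needed in the \emph{even}-$g$ irreducible case). Since you only asserted these computations rather than performing them, this does not break anything, but it is not how those cases actually close.

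The genuine gap is $g=3$, and it is exactly where you predicted trouble. You correctly observe that $(\mu_6)^6\approx8.19$ exceeds McMullen's cutoff $8$, so matrices whose curve graph is off the list (as well as weightings of $A_2^{\ast\ast\ast}$ and $Y^\ast$) could a priori have spectral radius in $[\sqrt2,\mu_6)$. But your proposed remedy---``a finite inspection of these extremely sparse directed graphs on six vertices''---is a declaration that a computation would settle the matter, not an argument: you give no bound on the entries or the number of edges of such a matrix, so the search space is not even shown to be finite, let alone tractable, and extracting such bounds from primitivity plus $\rho(A)<1.42$ is essentially the kind of analysis McMullen's machinery was built to avoid. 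The paper closes this window by a genuinely different, polynomial-level argument: it splits according to $\det(A)=\pm1$, uses the grouping of eigenvalues of a skew-reciprocal matrix under $\lambda\mapsto-\lambda^{-1}$ together with Perron--Frobenius to show that in the totally real cases (and the case $\det(A)=1$) Schinzel's Corollary 1'~\cite{Schinzel} forces $\rho(A)\ge\varphi>\mu_6$; in the one remaining case it writes $P(t)=t^6+at^5+bt^4+ct^3-bt^2+at-1$ and derives explicit bounds ($a\in\{-2,-1,0\}$ after $t\mapsto-t$, $c\in\{-2a-3,\dots,-2a+3\}$, $b\in\{-8,\dots,2\}$) from $\rho<1.42$, leaving only $231$ polynomials to check. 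Your proposal would need to be completed by either this kind of coefficient-bounding argument or a rigorously bounded graph enumeration before it counts as a proof for $g=3$.
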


\begin{proof}[Proof for~$g\ge5$.]
We take a standard companion matrix to realise the largest root~$\mu_{2g}$ of the 
polynomial~$t^{2g}-t^{g+2}-t^{g-2}-1$ as a spectral radius of a nonnegative matrix 
in~$\mathrm{GL}_{2g}(\mathbb{Z})$. Furthermore, 
the associated directed graph has simple closed curves of lengths~$2g,g+2,g-2$, which have 
greatest common divisor~$1$. Indeed, since~$g$ is odd so is~$g+2$, so if~$n$ divides both~$2g$ 
and~$g+2$, then it must be odd itself and hence divide~$g$. But then, since~$n$ divides 
both~$g$ and~$g+2$, it must divide~$2$. But~$n$ being odd implies~$n=1$. This shows that the 
companion matrix we constructed is primitive.

We now note 
that~$(\mu_{2g})^{2g}$ is a descending sequence converging to~$3+2\sqrt{2}$, 
with first values~$\mu_6 \approx 8.19$ and~$\mu_{10} \approx 6.42$. The example 
in the case~$g=3$ is too large to be covered by McMullen's classification, and we give a separate 
argument covering this case below. For~$g\ge5$ odd, we can proceed as before, and finish the proof by 
showing that~$\mu_{2g}$ minimises the spectral radius among all skew-reciprocal primitive 
matrices~$A\in\mathrm{GL}_{2g}(\mathbb{Z})$ with one of the graphs in Figure~\ref{minimalrates}
except~$A_2^{\ast \ast \ast}$ or~$Y^\ast$ as their curve graph. 

\begin{enumerate}
\item $G=2A_1$. As we noted in Example~\ref{example}, there exist no primitive skew-reciprocal
matrices~$A\in\mathrm{GL}_{2g}(\mathbb{Z})$ with~$g>1$ and~$2A_1$ as their curve graph.

\item $G=3A_1$. In this case,~$Q(t) = 1-t^a-t^b-t^c$. Without loss of generality, we assume
that~$c=2g$, which implies~$a=2g-b$ if we want symmetrically distributed coefficients. We have 
multiple possibilities for~$a$: 
\begin{itemize}
\item $a=g$. In this case,~$Q(t) = 1-2t^{g}-t^{2g}$, which is not primitive.
\item $a = g-1$. In this case~$Q(t) = 1-t^{g-1}-t^{g+1}-t^{2g}$. Lemma~\ref{srec_constraints} implies 
that this polynomial is not skew-reciprocal. Indeed, for it to be skew-reciprocal, the coefficients 
of~$t^{g+1}$ and~$t^{g-1}$ would have to differ by a sign since~$g-1$ is even. 
\item $a=g-2$. This case gives exactly our candidate polynomial with largest root~$\mu_{2g}$. 
\item $a<g-2$. By Proposition~$3.2$ in~\cite{LS}, we know that the largest root 
of the reciprocal polynomial~$t^{2g}-t^{g+d}-t^{g-d}-1$ is a strictly increasing function of~$d$.
In particular, the spectral radii we obtain for~$a<g-2$ are strictly larger than our candidate. 
\end{itemize}
\item $G=4A_1$. In this case,~$Q(t) = 1-t^a-t^b-t^c-t^d.$ We realise the reciprocal of~$Q(t)$ as 
the characteristic polynomial of a standard companion matrix.  Since there are five terms, 
there must be a middle coefficient. Deleting this middle coefficient amounts to decreasing a 
coefficient of the companion matrix from~$1$ to~$0$, strictly reducing the spectral radius. 
After this modification, the polynomial is among the examples we have already dealt with in 
the case~$G=3A_1$. 
\item $G=5A_1$ or~$G=6A_1$. This case can be dealt with in the same way as the 
case~$G=4A_1$. We delete the coefficients of a pair of terms whose powers add to~$2g$ (in the 
case of~$G=5A_1$) and additionally the middle coefficient (in the case of~$G=6A_1$). 
\item $G=A_2^\ast$. In this case,~$Q(t) = 1 -t^a-t^b-t^c + t^{a+b}$, where~$c$ is the weight 
on the isolated vertex of~$A_2^\ast$. Since there are five terms, there must be a middle coefficient,
which by Lemma~\ref{srec_constraints} implies that the leading coefficient is negative. 
We therefore must have~$c=2g$ and we can assume without loss of generality that~$b=g$ to get 
a polynomial of the form~$$Q(t) = 1-t^{a} - t^g + t^{a+g} - t^{2g},$$ and in particular~$2a+g = 2g$. 
But this implies that~$g=2a$ is even, a contradiction.
\item $G=A_2^{\ast\ast}$.  In this case,~$Q(t) = 1- t^a-t^b - t^c - t^d + t^{a+b}$,
where~$c$ and~$d$ are the weights on the isolated vertices of~$A_2^{\ast\ast}$. 
If the leading coefficient of~$Q(t)$ is positive, then~$a+b = 2g$ and the resulting polynomial is 
reciprocal. By McMullen's analysis of the curve graph~$A_2^{\ast\ast}$ for reciprocal weights~\cite{McM}, 
a normalised spectral radius in this case must be~$\ge (2+\sqrt{3})^2 > \mu_{2g}$. It remains to 
consider the case of a negative leading coefficient. Without loss of generality, we assume~$c=2g$. 
In order to have symmetrically distributet moduli of the coefficients, we must either have~$2a+b=2g$ 
and~$b+d = 2g$ or~$2b+a=2g$ and~$a+d=2g$. Both cases imply that~$d$ is even, and hence so 
must be~$b$ (in the former case) or~$a$ (in the latter case). In both cases, we get a contradiction to 
Lemma~\ref{srec_constraints}, which states that the coefficients must differ by a sign for even 
powers. 
\item $G=A_3^\ast$. In this case,~$Q(t) = 1-t^a-t^b-t^c-t^d+t^{a+b}+t^{b+c}$. Since there are seven 
terms, there must be a nonvanishing middle coefficient. By Lemma~\ref{srec_constraints}, this can 
only happen if the leading coefficient is negative. We must have~$d=2g$ 
and get $$Q(t) = 1-t^a-t^b-t^c + t^{a+b} +t^{b+c} -t^{2g}.$$
We distinguish cases depending on which term has power~$g$. 
\begin{itemize}
\item if one among~$a,b$ and~$c$ equals~$g$, we have~$a,b,c \le g$ 
and furthermore~$a+b,b+c > g$. This implies that~$a+b, b+c$ and two among~$a,b,c$ are even by 
Lemma~\ref{srec_constraints}. But then clearly all among~$a,b,c$ are even, and hence 
is~$g$, a contradiction.
\item if~$a+b=g$, then~$a,b < g$ and hence~$c,b+c>g$. Also~$b+c<2g$ so we must 
have~$a+c= 2g = 2b+c$. In particular,~$a=2b$ is even, and hence so must be~$c$. This 
contradicts Lemma~\ref{srec_constraints}, which states that coefficients must differ by a sign 
for terms with even powers.  The argument for the case~$b+c=g$ is obtained by switching~$a$ and~$c$. 
\end{itemize} 
\end{enumerate}
\end{proof}

\begin{proof}[Proof for~$g=3$.]
The candidate polynomial~$t^6-t^5-t-1$ has maximal real root~$\mu_6 \approx 1.4196 > \sqrt{2}$, 
so we only need to check other polynomials with roots bounded from above by this number, and 
bounded from below by~$\sqrt{2}$. Indeed, our proof in the case~$g\ge 5$ shows that there is 
no spectral radius~$< 8^\frac{1}{6} = \sqrt{2}$ among skew-reciprocal primitive 
matrices~$A\in\mathrm{GL}_6(\mathbb{Z})$. We now distinguish cases depending on the 
determinant of such a matrix~$A$.

\emph{Case 1: $\det(A) = 1$.} In this case, the characteristic polynomial must have a factor~$(t^2+1)$. 
The reason for this is that the eigenvalues of a skew-reciprocal matrix come in groups:
\begin{itemize}
\item
if~$\lambda \not\in \mathbb{R}$, $\lambda \not= \pm i$ is an eigenvalue, 
then so are~$-\lambda^{-1}, \bar\lambda$ and~$-\bar\lambda^{-1}$. 
These four roots of the characteristic polynomial contribute~$+1$ to the determinant, 
\item
if~$\lambda \in \mathbb{R}$, $\lambda\ne 0$ is an eigenvalue, then so is~$-\lambda^{-1}$. 
These two roots of the characteristic polynomial contribute~$-1$ to the determinant, 
\item 
if~$\lambda = \pm i$ is an eigenvalue, then so is~$\bar\lambda = -\lambda$. These two roots 
contribute~$+1$ to the determinant.
\end{itemize} 
For~$g=3$, the only way for determinant~$+1$ is if the last case appears at least 
once. This implies that the polynomial is divisible by~$(t-i)(t+i) = t^2+1$. By Perron-Frobenius 
theory, we know that~$A$ has at least two real roots. In particular, the first case cannot occur 
and the spectral radius is a totally real algebraic integer with at most two Galois conjugates 
of modulus~$> 1$. If it is not an integer, it is an algebraic integer of degree at least two. 
In particular, the Mahler measure of its minimal polynomial is at least~$\varphi^2$ by Corollary 1' of 
Schinzel~\cite{Schinzel}. Since at most two Galois conjugates have modulus~$>1$, it follows 
that the modulus of the larger root is bounded from below by~$\varphi \approx 1.61 > \mu_6$.

\emph{Case 2: $\det(A) = -1.$} We first rule out the case where all eigenvalues are real. 
The spectral radius is an algebraic integer of degree at most six that is maximal in modulus 
among all its Galois conjugates. Skew-reciprocity of~$A$ and the fact that the minimal polynomial 
has constant coefficient~$\pm 1$ imply that at most half of the Galois conjugates of 
the spectral radius can have modulus~$>1$. Again, Schinzel's Corollary 1' in~\cite{Schinzel} implies 
that the spectral radius is bounded from below by~$\varphi \approx 1.61 > \mu_6$.

In the remaining case, the spectral radius~$\rho$ of~$A$ is of degree six and has four non-real Galois 
conjugates~$\lambda, -\lambda^{-1}, \bar\lambda$ and~$-\bar\lambda^{-1}$. Let the characteristic 
polynomial of~$A$ be given by
\begin{align*}
 P(t) &= t^6  + at^5 + bt^4 + ct^3 - bt^2 + at - 1 \\
 &= (t-\rho)(t + \rho^{-1})(t-\lambda)(t+\lambda^{-1})(t-\bar\lambda)(t+\bar\lambda^{-1}).
\end{align*}

\noindent
We get the following estimates for the coefficients~$a,b$ and~$c$. 

\begin{itemize}
\item Since~$\rho < 1.42$, we have~$|\rho-\rho^{-1}| < 0.72$. For the coefficient~$a$, we get
\begin{align*}
|a| & \le | \rho - \rho^{-1}| + \left|(\lambda + \bar\lambda) - (\lambda^{-1} + \bar\lambda^{-1})\right| \\
 & < 0.72 + 2|\mathrm{Re}(\lambda) - \mathrm{Re}(\lambda^{-1})| 
  =  0.72 + 2|\mathrm{Re}(\lambda) - \frac{\mathrm{Re}(\lambda)}{|\lambda|^2}| \\
  & =  0.72 + 2|\mathrm{Re}(\lambda)| \left(1 - \frac{1}{|\lambda |^{2}}\right) < 0.72 + 1.44 < 3, 
 \end{align*}
where we used~$|\mathrm{Re}(\lambda)| \le |\lambda| < 1.42$ in the second to last inequality.
Up to replacing~$P(t)$ by~$P(-t)$, we may assume that~$a\in\{-2,-1,0\}$. \smallskip

\item Since~$|\lambda| \le \rho < 1.42$, we have~$| \lambda - \lambda^{-1}| < 2.13$ 
and~$| \bar\lambda - \bar\lambda^{-1}| < 2.13$. We calculate 
\begin{align*}
c = -2a + (\rho^{-1} - \rho)(\lambda^{-1} - \lambda)(\bar\lambda^{-1} - \bar\lambda),
\end{align*}
where 
\begin{align*}
\left|(\rho^{-1} - \rho)(\lambda^{-1} - \lambda)(\bar\lambda^{-1} - \bar\lambda)\right| < 0.72 \cdot (2.13)^2 < 3.2.
\end{align*}
In particular,~$c \in \{ -2a-3, \dots , -2a + 3\}.$
\smallskip

\item We have
\begin{align*}
b = -3 + (\rho^{-1} - \rho)(\lambda^{-1} - \lambda + \bar\lambda^{-1} - \bar\lambda)       + (\lambda^{-1} - \lambda)(\bar\lambda^{-1} - \bar\lambda),
\end{align*}
where
\begin{align*}
| (\rho^{-1} - \rho)(\lambda^{-1} - \lambda + \bar\lambda^{-1} - \bar\lambda)   
&+ (\lambda^{-1} - \lambda)(\bar\lambda^{-1} - \bar\lambda) |  \\
    & < 0.72\cdot 1.44 + (2.13)^2 < 5.58. 
\end{align*}
In particular,~$b\in \{-8, \dots, 2\}$. 
\end{itemize}
There are now $3\cdot7\cdot 11 = 231$ remaining polynomials to check.
Listing them all as well as their roots, it is a quick check to see which ones among them have a real root 
with modulus between 1.41 and 1.42; only three polynomials remain. Among these three polynomials, 
only our candidate~$t^6-t^5-t-1$ remains if we insist that the real root with modulus between 1.41 and 1.42 
be maximal in modulus among all the roots of the polynomial.
\end{proof}

\begin{remark}
\emph{
Again we have shown that for~$g\ge3$, the polynomial $$t^{2g}-t^{g+2}-t^{g-2}-1$$ 
is the unique characteristic polynomial that can appear for a matrix minimising the spectral 
radius. The case~$g=3$ is not covered by McMullen's classification but our ad-hoc argument 
rules out all other possibilities for characteristic polynomials: while we gave ourselves the 
liberty to replace~$P(t)$ by~$P(-t)$ during the proof, we note that the root of~$t^6+t^5+t-1$ 
that is maximal in modulus is real and negative. Therefore, the polynomial~$t^6+t^5+t-1$  is not 
the characteristic polynomial of a primitive matrix.
}
\end{remark}

\end{document}